\theoremstyle{plain}
\newtheorem{thm}{Theorem}[section]
\newtheorem{pro}[thm]{Proposition}
\newtheorem{lem}[thm]{Lemma}
\newtheorem{proposition-principale}[thm]{Proposition principale}
\newtheorem{thm-principal}{Th\'eor\`eme principal}[section]
\theoremstyle{definition}
\newtheorem{eg}[thm]{Example}
\newtheorem{rem}[thm]{Remark}
\newenvironment{thm-A}
{\noindent{\bf Theorem A.--}\it}{\medskip}
\newenvironment{thm-M}
{\noindent{\bf Main Theorem.}\it}{\\}
\newenvironment{thm-AA}
{\noindent{\bf Theorem A'.}\it}{\\}
\newenvironment{thm-B}
{\noindent{\bf Theorem B.--}\it}{\medskip}
\newenvironment{thm-C}
{\noindent{\bf Theorem C.--}\it}{\\ }
\newenvironment{thm-BP}
{\noindent{\bf Bell-Poonen Theorem.--}\it}{\\ }
\newenvironment{thm-BB}
{\noindent{\bf Theorem B'.}\it}
\def\C{\mathbf{C}}
\def\bfk{\mathbf{k}}
\def\R{\mathbf{R}}
\def\Z{\mathbf{Z}}
\def\N{\mathbf{N}}
\def\id{{\mathsf{Id}}}
\def\bfx{{\mathbf{x}}}
\def\bfy{{\mathbf{y}}}
\def\sO{{\mathcal{O}}}
\def\sU{{\mathcal{U}}}
\def\sV{{\mathcal{V}}}
\def\sW{{\mathcal{W}}}
\def\sB{{\mathcal{B}}}
\def\sY{{\mathcal{Y}}}
\def\sZ{{\mathcal{Z}}}
\def\sT{{\mathcal{T}}}
\def\G{{\mathbb{G}}}
\def\bbP{\mathbb{P}}
\def\bbA{{\mathbb{A}}}
\def\bbG{{\mathbb{G}}}
\def\hotimes{\hat{\otimes}}
\def\Aut{{\sf{Aut}}}
\def\Bir{{\sf{Bir}}}
\def\char{{\sf{char}}}
\def\Mor{{\sf{Mor}}}
\def\pr{{\sf{pr}}}
\def\id{{\sf{id}}}
\def\GL{{\sf{GL}}\,}
\def\Tr{{\sf{Tr}}}
\def\T{{\sf{D}}}
\def\SU{{\sf{SU}}\,}
\def\End{{\sf{End}}\,}
\def\SU{{\sf{SU}}\,}
\def\Mor{{\text{Mor}}}
\def\supp{{\text{Support}}\,}
\def\Spec{{\rm Spec\,}}
\newcommand{\smg}[1]{\marginpar{\tiny\textcolor{olive}{#1}}}
\numberwithin{equation}{section}       
\begin{document}
	
	\setlength{\baselineskip}{0.54cm}        
	%
	%
	\title[Characterization of the affine space]
	{Families of commuting automorphisms, and a characterization of the affine space}
	\author{Serge Cantat, Andriy Regeta, and Junyi Xie}
	
	\address{Univ Rennes, CNRS, IRMAR - UMR 6625, F-35000 Rennes, France}
	\email{serge.cantat@univ-rennes1.fr, xiejunyi@bicmr.pku.edu.cn}
	
	\address{\noindent Institut f\"{u}r Mathematik, Friedrich-Schiller-Universit\"{a}t Jena, \newline
\indent  Jena 07737, Germany}
\email{andriyregeta@gmail.com}
	\address{BICMR, Peking University, Haidian District, Beijing 100871, China}

\email{xiejunyi@bicmr.pku.edu.cn}

	%
	%
	
	%
	%
	
	%
	%

		
	
	\maketitle
	
	\begin{abstract}
We prove that the affine space of dimension $n\geq 1$ over an uncountable algebraically closed field $\bfk$ is determined, 
among connected affine varieties, by its automorphism group (viewed as an abstract group). 
The proof is based on a new result concerning algebraic families of pairwise commuting automorphisms.
	\end{abstract}
%
%
\section{Introduction}\label{par:Intro}
%
%
 
\subsection{Characterization of the affine space} In this paper, $\bfk$ is an algebraically closed field and  $\bbA^n_\bfk$  denotes the affine space of dimension $n$ over $\bfk$. 
 
\medskip
		
\begin{thm-A} 
Let $\bfk$ be an algebraically closed and uncountable field. Let $n$ be a positive integer.
Let $X$ be a reduced, connected, affine variety over $\bfk$. If  its automorphism group $\Aut(X)$  is isomorphic to $\Aut(\bbA^n_\bfk)$ as an abstract group,
then $X$ is isomorphic to $\bbA^n_\bfk$ as a variety over $\bfk$. 
\end{thm-A}

Note that no assumption is made on $\dim(X)$; in particular, we do not assume $\dim(X)=n$.
This theorem is our main goal. 
It would be great to lighten the hypotheses on $\bfk$, 
but besides that the following remarks show the result is optimal:

\smallskip
	
$\bullet$ The affine space $\bbA^n_\bfk$ is not determined by its automorphism group in the category of quasi-projective varieties because
\begin{enumerate}
\item  $\Aut(\bbA^n_\bfk)$ is naturally isomorphic to $\Aut(\bbA^n_\bfk \times Z)$ for any   projective variety $Z$  with $\Aut(Z)=\{\id\}$;
\item  for every 
algebraically closed field $\bfk$ there is a projective variety $Z$ over $\bfk$ such that $\dim(Z)\geq 1$ and $\Aut(Z)=\{\id\}$ (one can take
a general curve of genus $\ge 3$; see \cite{Poonen2000} and \cite[Main Theorem]{Popp}). 
\end{enumerate}

\smallskip

$\bullet$ The connectedness is crucial:  
$\Aut(\bbA^n_\bfk)$ is isomorphic to the automorphism group of 
the disjoint union of $\bbA^n_\bfk$ and $Z$ if $Z$ is a variety with $\Aut(Z)=\{\id\}$. 

\subsection{Previous results} The literature contains already several theorems that may be compared to Theorem~A. We refer to \cite{Deserti:Algebra} for an interesting introduction 
and for the case of the complex affine plane; see \cite{Kraft2017, KRS18} for extensions 
and generalisations of D\'eserti's results in higher dimension.  Some of those results assume
 $\Aut(X)$ to be isomorphic to $\Aut(\bbA^n_\bfk)$ as an ind-group; this is a rather strong hypothesis. Indeed, there are 
examples of affine varieties $X$ and $Y$ such that $\Aut(X)$ and 
$\Aut(Y)$ are isomorphic as abstract groups, but not isomorphic as ind-groups (see \cite[Theorem 2]{1710.06045}).
In \cite{LRU18} the authors prove that  an affine toric surface is determined by its group of automorphisms in the category of affine surfaces; unfortunately, their methods  do not work in higher dimension. 

\subsection{Commutative families}\label{par:1.3} The proof of Theorem~A relies on a new result concerning families of pairwise commuting automorphisms of affine varieties.
To state it, we need a few standard notions.
If $V$ is a subset of a group $G$, we denote by $\langle V \rangle$ the subgroup generated by $V$, 
i.e. the smallest subgroup of $G$ containing $V$. We say that $V$ is  {\bf{commutative}} if $fg=gf$ for all pairs
or equivalently, if $\langle V\rangle$ is an abelian group. 
In the following statement, $\Aut(X)$ is viewed as an ind-group, so that it makes sense to speak of algebraic subsets of it (see the
definitions in Section~\ref{ind-groups}). 

\medskip
	
\begin{thm-B} 
Let $\bfk$ be an algebraically closed field and let $X$ be an affine variety over $\bfk$.
Let $V$ be a commutative   irreducible algebraic subvariety of  $\Aut(X)$ containing the identity. 
Then $\langle V \rangle$ is an algebraic subgroup of $\Aut(X)$.
\end{thm-B}

It is crucial to assume that $V$ contains the identity. Otherwise,  a counter-example would be given by
a single automorphism $f$ of $X$ for which the sequence $n\mapsto \deg(f^n)$ is not bounded (see Section~\ref{par:degrees}). 
To get a family of positive dimension, consider the set $V$ of automorphisms $f_a\colon (x,y)\mapsto (x,axy)$ of $(\bbA^1_\bfk\setminus\{0\})^2$, 
for $a\in \bfk\setminus \{ 0\}$; $V$ is commutative and irreducible, but $\langle V\rangle$ has infinitely many connected components (hence $\langle V\rangle$
is not algebraic). 
However,  if $V$ satisfies the hypotheses of Theorem~B except that it does not contain the identity, the subset $V\cdot V^{-1}\subseteq \Aut(X)$ is irreducible, commutative and contains the identity; if its dimension is positive, Theorem~B implies that $\Aut(X)$ contains a commutative algebraic subgroup of positive dimension.

\begin{rem}\label{rem:intro-nested}
As noted by the referee and H. Kraft, 
Theorem B is equivalent to the following statement. 
{\sl{Let $X$ be an affine variety, over an algebraically closed field $\bfk$.  
Then, any connected commutative ind-subgroup $G$ of $\Aut(X)$ is a union
of commutative algebraic subgroups.}} With the vocabulary of~\cite[\S 0.9 and 9.4]{FK18}, this means that {\sl{the connected commutative ind-subgroups of $\Aut(X)$ are nested}}.  Indeed, $G$ is an increasing union of irreducible, connected, and
commutative subvarieties $V_i$, $i\geq 1$, containing $\id_X$ (see~\S~\ref{par:ind_groups_connected} below). By Theorem~B, $G$ is  the increasing union of the algebraic subgroups $G_i:=\langle V_i\rangle$. 
\end{rem}

\subsection{Acknowledgement}	 We thank Jean-Philippe Furter,  Hanspeter Kraft, and 
Christian Urech for interesting discussions and comments, and the 
referee for helpful criticisms and suggestions. Hanspeter Kraft provided many interesting remarks that greatly 
improved this article, in particular the proof of Theorem~\ref{corconnuncountablecommutator}, using the ind-group structure of centralizers, 
is simpler than the first proof we had written. 
%
%
\section{Degrees and ind-groups}
%
%

\subsection{Degrees and compactifications} \label{par:degrees}

Let $X$ be an affine variety. Embed $X$ in the affine space $\bbA^N_\bfk$ for some $N$, and denote
by $\bfx=(x_1, \ldots, x_N)$ the affine coordinates of $\bbA^N_\bfk$. Let $f$ be an 
automorphism of $X$. Then, there are $N$ polynomial functions $f_i\in \bfk[\bfx]$ such that 
$f(\bfx)=(f_1(\bfx), \ldots, f_N(\bfx))$ for $\bfx\in X$. One says that $f$ has degree $\leq d$ if one can choose
the $f_i$ of degree $\leq d$; the degree $\deg(f)$ can then be defined as the minimum of these degrees $d$. 
This notion depends on the embedding $X \hookrightarrow \bbA^N_\bfk$.

Another way to proceed is as follows. 
To simplify the exposition, assume that all irreducible components of $X$ have the same dimension $k=\dim(X)$. 
Fix a compactification ${\overline{X}}_0$ of $X$ by a projective variety and let ${\overline{X}}\to {\overline{X}}_0$
be the normalization of ${\overline{X}}_0$. If $H$ is an ample line bundle on ${\overline{X}}$, and if $f$ is a birational transformation of ${\overline{X}}$, 
one defines $\deg_H(f)$ (or simply $\deg(f)$) to be the intersection number 
\begin{equation}
\deg(f)=(f^*H)\cdot (H)^{k-1}.
\end{equation}
Since $\Aut(X)\subset \Bir({\overline{X}})$, we obtain a second notion of degree.  
It is shown in~\cite{Dang-degrees-2020, Truong} (see also \S~\ref{par:appendix} below)
that these notions of degrees are compatible: if we change the embedding $X \hookrightarrow \bbA^N_\bfk$,
or the polarization $H$ of  ${\overline{X}}$, or the compactification  ${\overline{X}}$, 
we get different degrees, but any two of these degree
functions are always comparable, in the sense that there are positive constants satisfying
\begin{equation}
a\deg(f)\leq \deg'(f)\leq b\deg(f) \quad \; (\forall f \in \Aut(X)).
\end{equation}

A subset $V\subset \Aut(X)$ is of {\bf{bounded degree}} if there is a uniform upper bound $\deg(g) \leq D<+\infty$ for all $g\in V$. 
This notion does not depend on the choice of degree. If $V\subset \Aut(X)$ is of bounded degree, 
then $V^{-1}=\{ f^{-1}\; ; \; f\in V\} \subset \Aut(X)$ is of bounded degree too (see \cite{Dang-degrees-2020} and 
\cite{FK18} for instance); we shall not use this result.

\subsection{Automorphisms of affine varieties and  ind-groups}\label{ind-groups}
The notion of an ind-group goes back to Shafarevich, who called  these objects infinite dimensional groups in \cite{Shafarevich1966}. We refer to  \cite{FK18, KPZ15}  for detailed introductions to this notion. 

\subsubsection{Ind-varieties}  
By an {\bf{ind-variety}}  we mean a set $\mathcal{V}$ together with an ascending filtration $\mathcal{V}_0 \subset \mathcal{V}_1 \subset \mathcal{V}_2 \subset ... \subset \mathcal{V}$ such that the following is satisfied:
		
		(1) $\mathcal{V} = \bigcup_{k \in \N} \mathcal{V}_k$;
				
		(2) each $\mathcal{V}_k$ has the structure of an algebraic variety over $\bfk$;
		
		(3) for all $k \in \N$ the inclusion $\mathcal{V}_k \subset \mathcal{V}_{k+1}$ is a closed immersion.

\noindent{}We refer to \cite{FK18} for the notion of equivalent filtrations on ind-varieties. 

A map $\Phi: \mathcal{V} \to \mathcal{W}$ between ind-varieties $\mathcal{V} = \bigcup_k \mathcal{V}_k$ and $\mathcal{W} = \bigcup_l \mathcal{W}_l$ is a {\bf{morphism}}  if for each $k \in \N$ there is $l \in \N$ such that 
$\Phi(\mathcal{V}_k) \subset \mathcal{W}_l$ and the induced map $\Phi \colon {\mathcal{V}_k} \to \mathcal{V}_l$ is a morphism of algebraic varieties. Isomorphisms of ind-varieties are defined in the usual way. 
An ind-variety $\mathcal{V} = \bigcup_k \mathcal{V}_k$ has a natural Zariski topology: $S \subset \mathcal{V}$ is {\bf{closed}} (resp. {\bf{open}}) if $S_k := S \cap  \mathcal{V}_k \subset \mathcal{V}_k$ is closed (resp. open) for every $k$. A closed subset $S \subset \mathcal{V}$ inherits a natural structure of ind-variety and is called an {\bf{ind-subvariety}}. An ind-variety $\mathcal{V}$ is said to be affine if each $ \mathcal{V}_k$ is affine. We shall only consider affine ind-varieties and  for simplicity we just call them ind-varieties. An ind-subvariety $S$ is an {\bf{algebraic subvariety}} of $\mathcal{V}$ if $S \subset \mathcal{V}_k$ for some $k \in \N$; by definition, a {\bf{constructible subset}} will always be a constructible subset in an algebraic subvariety of $\mathcal{V}$.
 
\subsubsection{Ind-groups}\label{par:ind_groups_connected}
The  product of two ind-varieties is defined in the obvious way. An ind-variety $\mathcal{G}$ is called an {\bf{ind-group}} if the underlying set $\mathcal{G}$ is a group and
the map $\mathcal{G} \times \mathcal{G} \to \mathcal{G}$, defined by $(g,h) \mapsto gh^{-1}$, is a morphism of ind-varieties. If a subgroup $H$ of $\mathcal{G}$ is closed for the Zariski topology, then $H$ is naturally an ind-subgroup of $\mathcal{G}$; it is an {\bf{algebraic subgroup}} if it is an algebraic subvariety of $\mathcal{G}$.
A connected component of an ind-group $\mathcal{G}$, with a given filtration $\mathcal{G}_0 \subset \mathcal{G}_1 \subset \mathcal{G}_2 \subset \dots$,
is an increasing union of connected components $\mathcal{G}_i^c$ of $\mathcal{G}_i$.
The {\bf{neutral component}} $\mathcal{G}^\circ$ of $\mathcal{G}$ is  the union of the connected components of the $\mathcal{G}_i$ 
containing  the neutral element $\id \in \mathcal{G}$.
We refer to  \cite{FK18}, and in particular to Propositions 1.7.1 and 2.2.1, showing that
{\sl{$\mathcal{G}^\circ$ is an ind-subgroup in $\mathcal{G}$ whose index is at most countable}} (the proof of \cite{FK18} works in arbitrary characteristic).
	
We say that the ind-group $\mathcal{G}$ acts \textbf{morphically} on $X$ if the action $\mathcal{G} \times X \to X$ of  $\mathcal{G}$ on $X$
induces  a morphism of algebraic varieties $\mathcal{G}_i \times X \to X$ for every $i \in \N$.

\begin{thm}\label{thm:ind-group}
Let $X$ be an affine variety over an algebraically closed field~$\bfk$. Then $\Aut(X)$ has the structure of an ind-group
acting morphically on $X$.\end{thm}

In particular,  if $V$ is an algebraic subset of $\Aut(X)$,
then $V(x) = \{ v(x) \mid v \in V \}\subset X$ is constructible for every $x\in X$ by Chevalley's theorem.  
The proof can be found in \cite[Proposition 2.1]{KPZ15} (see also~\cite{FK18}, Theorems~5.1.1 and~5.2.1): the authors assume that the field has characteristic $0$, but their proof works in the general setting. To obtain a filtration, one starts with a closed embedding $X \hookrightarrow \bbA^N_\bfk$, and
define $\Aut(X)_{d}$ to be the set of automorphisms $f$ such that $\max\{\deg (f),\deg(f^{-1})\}\leq d$. For example, if $X = \bbA^n_\bfk$, the ind-group filtration 
$(\Aut(\bbA^n_\bfk)_d)$ of $\Aut(\bbA^n_\bfk)$ is defined by the following property: an automorphism $f$   
is in $(\Aut(\bbA^n_\bfk)_d)$ if the polynomial formulas for $f=(f_1, \ldots, f_n)$ and for its inverse $f^{-1}=(g_1, \ldots, g_n)$ satisfy
\begin{equation}
\deg f_i \leq d \;  {\text{ and }} \; \deg g_i \leq d, \; \; (\forall i\leq n).
\end{equation}
Note that an ind-subgroup is algebraic if and only if it is of bounded degree. Thus, we get the following basic fact. 
 
\begin{pro}\label{ind-group-bded-degree}
Let $X$ be an affine variety over an algebraically closed field~$\bfk$. 
Let $V$ be an irreducible algebraic subset of $\Aut(X)$ that contains $\id$. Then $\langle V\rangle$ is an algebraic subgroup of $\Aut(X)$, 
acting algebraically on $X$, if and only if $\langle V \rangle$ is of bounded degree.
\end{pro}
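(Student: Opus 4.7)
The easy direction, that every algebraic subgroup has bounded degree, is tautological: an algebraic subgroup is by definition contained in some $\Aut(X)_k$, and the algebraic action on $X$ is the restriction of the morphism $\Aut(X)_k\times X\to X$ provided by Theorem~\ref{thm:ind-group}.

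For the nontrivial direction, suppose $\langle V\rangle$ has bounded degree. Since $\langle V\rangle$ is closed under inversion, both $\deg(g)$ and $\deg(g^{-1})$ are uniformly bounded on $\langle V\rangle$, so $\langle V\rangle\subset\Aut(X)_d$ for some $d$. The plan is to realise $\langle V\rangle$ as the Zariski closure, inside the finite-dimensional variety $\Aut(X)_d$, of a single sufficiently long power $V^n=V\cdot V\cdots V$. Because $\id\in V$, the $V^n$ form an ascending chain of constructible subsets of $\Aut(X)_d$, each irreducible as the image of the irreducible variety $V^{\times n}$ under the multiplication morphism of ind-varieties. Their Zariski closures $\overline{V^n}$ then form an ascending chain of irreducible subvarieties of $\Aut(X)_d$, which stabilises by dimension: set $W:=\overline{V^{n_0}}=\overline{V^n}$ for all $n\ge n_0$.

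Next I verify that $W$ is an algebraic subgroup. For each $v\in V^{n_0}$ the left-multiplication morphism $L_v$ sends $V^{n_0}$ into $V^{2n_0}\subset W$; Zariski continuity then gives $L_v(W)=L_v(\overline{V^{n_0}})\subset\overline{V^{2n_0}}=W$, so $V^{n_0}\cdot W\subset W$. For fixed $w\in W$, the right-multiplication morphism $R_w$ therefore sends $V^{n_0}$ into $V^{n_0}\cdot W\subset W$, and by continuity $W\cdot w=R_w(W)\subset W$, proving $W\cdot W\subset W$. Combined with $\id\in V\subset W$, this makes $W$ a subsemigroup with identity. For closure under inversion, note that for each $w\in W$ the map $L_w\colon\Aut(X)\to\Aut(X)$ is an isomorphism of ind-varieties (with inverse $L_{w^{-1}}$), so $L_w(W)$ is a closed irreducible subvariety of $\Aut(X)$ of the same dimension as $W$; since $L_w(W)\subset W$ by the subsemigroup property and $W$ itself is irreducible, we must have $L_w(W)=W$. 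In particular $\id\in L_w(W)$, which yields $w^{-1}\in W$. Thus $W$ is a closed subgroup of $\Aut(X)$ contained in $\Aut(X)_d$, i.e.\ an algebraic subgroup.

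Finally I identify $W$ with $\langle V\rangle$. The inclusion $\langle V\rangle\subset W$ is clear because $V\subset W$ and $W$ is a group. Conversely, $V^{n_0}$ is a constructible dense subset of the irreducible variety $W$, so it contains an open dense subset $U\subset W$. Since $W$ is a group, left multiplication and inversion are homeomorphisms of $W$, so $w\cdot U^{-1}$ is open dense in $W$ for every $w\in W$; its intersection with $U$ inside the irreducible variety $W$ is non-empty, giving the standard factorisation $w\in U\cdot U\subset V^{2n_0}\subset\langle V\rangle$. Therefore $\langle V\rangle=W$ is algebraic, and its action on $X$ is algebraic by Theorem~\ref{thm:ind-group}. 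The delicate step is the passage from subsemigroup to subgroup; the argument above relies crucially on the fact that $L_w$ is an isomorphism of the ambient ind-variety and therefore preserves closedness.
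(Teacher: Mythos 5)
Your proof is correct, and its skeleton is the same as the paper's: bounded degree (plus closure under inversion) puts $\langle V\rangle$ inside some $\Aut(X)_d$; the closures of the powers of an irreducible constructible set containing $\id$ form an ascending chain of irreducible closed subsets of $\Aut(X)_d$ that stabilises; and the final identification of the group with a fixed power uses the standard trick that a dense constructible subset contains a dense open $U$, so any element $w$ satisfies $w\cdot U^{-1}\cap U\neq\emptyset$, hence $w\in U\cdot U$. The only genuine divergence is that the paper simply takes the Zariski closure of $\langle V\rangle$ and asserts it is an algebraic subgroup, whereas you build the candidate group as $\overline{V^{n_0}}$ and verify by hand that it is closed under multiplication (via continuity of left and right translations) and under inversion (via the dimension argument for $L_w(W)\subset W$); you also work with powers of $V$ itself rather than of $V\cdot V^{-1}$, which is harmless since $\id\in V$. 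This makes your write-up slightly more self-contained at the one point the paper leaves implicit, at the cost of a little extra length; both arguments are otherwise interchangeable.
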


\begin{eg} 
Let $g\in \SU_2(\C)$ be an irrational rotation, and set $V=\{g\}\subset \Aut(\bbA^2_\C)$. Then $\langle V\rangle$ 
is not an algebraic group, but it is Zariski dense in an abelian algebraic subgroup of $\GL_2(\C)\subset \Aut(\bbA^2_\C)$.
This shows that $\id \in V$ is a necessary hypothesis. 
\end{eg}

\begin{proof}(See also Chap I, Prop. 2.2 of~\cite{Borel:Linear_Algebraic_Groups}).--
If $\langle V\rangle$ is algebraic, then it is contained in some $\Aut(X)_{d}$ and, as such, is of bounded degree; 
moreover, Theorem~\ref{thm:ind-group} implies that the action $\langle V\rangle \times X\to X$ is algebraic. 
If  $\langle V\rangle$ is of bounded degree, then $\langle V\rangle^{-1}=\langle V\rangle$ is of bounded degree too, and $\langle V\rangle$ is contained in some $\Aut(X)_{d}$.
The Zariski closure $\overline{\langle V\rangle}$ of $\langle V\rangle$ in $\Aut(X)_{d}$ is an algebraic subgroup of $\Aut(X)$; 
we are going to show that $\overline{\langle V\rangle}=\langle V\rangle$. 
Set $W=V\cdot V^{-1}$, and note that $W$ contains $V$ because $\id\in V$.
By definition, $\langle V\rangle$ is the increasing union 
of the subsets $W\subset W\cdot W\subset \cdots \subset W^k\subset \cdots$, 
and by Chevalley theorem, each $W^k\subset  \overline{\langle V\rangle}$ is constructible. The $\overline{W^k}$ are irreducible, because $V$ is irreducible, and their dimensions are bounded by the dimension of $\Aut(X)_{d}$;
so, there exists $\ell\geq 1$ such that $\overline{W^\ell}=\cup_{k\geq 1}\overline{W^k}\subseteq \overline{\langle V\rangle}$.
Since $\langle V\rangle\subseteq \cup_{k\geq 1}\overline{W^k}$, we get $\overline{W^\ell}=\overline{\langle V\rangle}$; thus, 
there exists a Zariski dense open subset $U$ of $\overline{\langle V\rangle}$ which is contained in $W^\ell$.
Now, pick any $f$ in $\overline{\langle V\rangle}$.
Then $(f\cdot U)$ and $U$ are two Zariski dense open subsets of $\overline{\langle V\rangle}$, so  $(f\cdot U)$ intersects $U$
and this implies that $f$ is in $U\cdot U^{-1}\subset \langle V\rangle$. So $\overline{\langle V\rangle}\subset \langle V\rangle$.
\end{proof}

%
%
\section{Algebraic varieties of commuting automorphsims}
%
%
	
Let $\bfk$ be an algebraically closed field.
Let $X$ be an affine variety over $\bfk$ of dimension $d$. 
In this section, we prove Theorem~B. 
Since $V\subset \Aut(X)$ is irreducible and contains the identity, 
every irreducible component of $X$ is invariant under the action of $V$ (and of $\langle V\rangle$); thus, we may and do 
assume $X$ to be irreducible. 
	 	
\subsection{Invariant fibrations, base change, and degrees}\label{par:base_change_and_fibrations}
Let $B$ and $Y$ be affine varieties and assume that $B$ is irreducible.
Let $\pi\colon Y\to B$ be a dominant morphism.  
By definition, $\Aut_\pi(Y)$ is the group of automorphisms $g\colon Y\to Y$ such that $\pi\circ g = \pi$. 
Note that $\Aut_\pi(Y)$ is a closed ind-subgroup of $\Aut(Y)$. 
	
Let $B'$ be another irreducible affine variety, and let $\psi\colon B'\to B$ be a  quasi-finite and 
dominant  morphism.  
Pulling-back $\pi$ by $\psi$, we get a new affine variety $Y\times_B B'=\{(y,b')\in Y\times B'; \; \pi(y)=\psi(b')\}$;  
the projections $\pi_Y \colon Y\times_B B'\to Y$ and $\pi'\colon Y\times_B B'\to B'$  satisfy $\psi\circ \pi'=\pi\circ \pi_Y$.
There is a natural homomorphism
\begin{equation}
\iota_\psi\colon \Aut_\pi(Y)\to \Aut_{\pi'}(Y\times_B B')
\end{equation}
defined by $\iota_\psi(g)=g\times_B \id_{B'}$. For every $g\in \Aut_\pi(Y)$, we have 
\begin{equation}
g\circ\pi_Y=\pi_Y\circ \iota_{\psi}(g) \quad  {\text{ and }} \quad \pi'=\pi'\circ \iota_{\psi}(g).
\end{equation}
If $\iota_{\psi}(g)=\id$ then $g\circ\pi_Y=\pi_Y$ and  $g=\id$ because  $\pi_Y$ is dominant; hence, {\sl{$\iota_{\psi}$ is an embedding}}. 
Since $\pi_Y$ is dominant and generically finite, the next lemma follows from Proposition \ref{progenfinidegrefun}.

\begin{lem}\label{lem:basechange} 
If $S$ is a subset of $\Aut_\pi(Y)$, then $S$ is of bounded degree if and only 
if its image $\iota_{\psi}(S)$ in $\Aut_{\pi'}(Y\times_B B')$ is of bounded degree.
\end{lem}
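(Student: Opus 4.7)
The map $\pi_Y\colon Y\times_B B'\to Y$ is the base change of the finite morphism $\psi$, so it is itself finite; since we already know it to be dominant, it is in fact surjective. I will compare $\deg(g)$ and $\deg(\iota_\psi(g))$ by choosing polarizations of projective compactifications of $Y$ and $Y\times_B B'$ that are matched through this finite cover. Concretely, fix a normal projective compactification $\overline{Y}$ of $Y$ with an ample class $H$, so that $\deg_H$ is a valid degree function on $\Aut(Y)\subset\Bir(\overline{Y})$ in the sense of Section~\ref{par:degrees}. Take $\overline{Y'}$ to be the normalization of $\overline{Y}$ in the function field $\bfk(Y\times_B B')$. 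Then $\overline{Y'}$ is a normal projective variety containing $Y\times_B B'$ as a Zariski open subset, and $\pi_Y$ extends to a finite morphism $\overline{\pi_Y}\colon\overline{Y'}\to\overline{Y}$ of some degree $N$. Since $\overline{\pi_Y}$ is finite, the pullback $L:=\overline{\pi_Y}^*H$ is ample on $\overline{Y'}$, hence $\deg_L$ is a valid degree function on $\Aut(Y\times_B B')\subset\Bir(\overline{Y'})$.

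\textbf{Comparing the two degrees.} Let $k=\dim Y=\dim(Y\times_B B')$. For any $g\in\Aut_\pi(Y)$, the identity $g\circ\pi_Y=\pi_Y\circ\iota_\psi(g)$ on $Y\times_B B'$ extends to an identity of rational maps $g\circ\overline{\pi_Y}=\overline{\pi_Y}\circ\iota_\psi(g)$ from $\overline{Y'}$ to $\overline{Y}$. After resolving indeterminacies on a common model dominating both $\overline{Y'}$ and $\overline{Y}$, the projection formula for the finite morphism $\overline{\pi_Y}$ yields
\[
\deg_L(\iota_\psi(g))\;=\;\iota_\psi(g)^*L\cdot L^{k-1}\;=\;\overline{\pi_Y}^*(g^*H)\cdot\overline{\pi_Y}^*H^{k-1}\;=\;N\cdot\deg_H(g),
\]
so $\deg_L\circ\iota_\psi$ is exactly proportional to $\deg_H$ on $\Aut_\pi(Y)$.

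\textbf{Conclusion.} By the comparability result of Dang and Truong recalled in Section~\ref{par:degrees}, any two valid degree functions on $\Aut(Y)$ (respectively on $\Aut(Y\times_B B')$) are equivalent up to multiplicative constants. In particular, $\deg_H$ is equivalent to the degree attached to any fixed affine embedding of $Y$, and $\deg_L$ is equivalent to the degree attached to any affine embedding of $Y\times_B B'$. Combined with the displayed formula, this gives the desired equivalence: $S\subset\Aut_\pi(Y)$ is of bounded degree if and only if $\iota_\psi(S)\subset\Aut_{\pi'}(Y\times_B B')$ is of bounded degree. The main obstacle in this plan is the first step: arranging for the extension $\overline{\pi_Y}$ to be an honest finite morphism of projective varieties, so that $\overline{\pi_Y}^*H$ remains ample and the projection formula applies without correction terms coming from exceptional divisors. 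The normalization-in-function-field construction handles this cleanly, after which the rest reduces to standard intersection theory and the Dang--Truong theorem.
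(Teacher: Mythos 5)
Your strategy is the same as the paper's: build a finite morphism between projective models, pull back an ample class (ampleness is preserved by finiteness), apply the projection formula to get $\deg_L(\iota_\psi(g))=N\cdot\deg_H(g)$, and then invoke the Dang--Truong comparability of degree functions. The only real divergence is how you construct the compactified cover, and there your justification has an inaccuracy: you take $\overline{Y'}$ to be the normalization of $\overline{Y}$ in $\bfk(Y\times_B B')$ and assert that it contains $Y\times_B B'$ as a Zariski open subset. That assertion requires $Y\times_B B'$ to be normal, which is not part of the hypotheses (and can fail even when $Y$ and $B'$ are normal over a normal $B$: for instance $B=\bbA^2_{x,y}$, $Y=\{t^2=xy\}$, $B'=\{s^2=x\}$ gives the Whitney umbrella $\{t^2=s^2y\}$ as fiber product). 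The open subset $\overline{\pi_Y}^{-1}(Y)$ of $\overline{Y'}$ is the normalization of $Y\times_B B'$, not $Y\times_B B'$ itself, so as stated your model is not literally a compactification of the variety whose automorphisms you want to measure, and likewise $\pi_Y$ itself does not extend to $\overline{\pi_Y}$ --- only its composition with the normalization does.

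This is repairable rather than fatal, for two reasons. First, the degree of Section~\ref{par:degrees} is by definition computed on the \emph{normalization} of a projective compactification, and your $\overline{Y'}$ is exactly the normalization of such a compactification of $Y\times_B B'$ (namely of the closure of $Y\times_B B'$ in $\overline{Y}\times_{\overline{B}}\overline{B'}$), so $\deg_L$ is a legitimate degree function after all --- but you need to say this, since the containment you invoke is false. Alternatively one can appeal to the stronger form of the Dang--Truong results, which compare degrees across arbitrary normal projective birational models under conjugation by a fixed birational map. The paper sidesteps the issue by compactifying on the base: it reduces to $B'$ normal, takes $\overline{B'}\to\overline{B}$ to be the normalization of $\overline{B}$ in $\bfk(B')$, and uses the fiber product $\overline{Y}\times_{\overline{B}}\overline{B'}$, which does contain $Y\times_B B'$ as an affine open subset; the projection to $\overline{Y}$ is finite and the rest of the computation is the one you wrote. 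So: same mechanism, slightly different (and in your case slightly misjustified) choice of the compactified covering.
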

	 	
Let us come back to the example $f(x,y)=(x,xy)$ from Section~\ref{par:1.3}. This is an automorphism of the multiplicative
group $\bbG_m\times \bbG_m$ that preserves the projection onto the first factor. The degrees of the iterates
$f^n(x,y)=(x,x^ny)$ are not bounded, but on every fiber $\{x=x_0\}$, the restriction of
$f^n$ is the linear map $y\mapsto (x_0)^n y$, of constant degree $1$. More generally, if $x\in B \mapsto A(x)$ is a regular map with values in 
$\GL_N(\bfk)$, then $g\colon (x,y)\mapsto (x,A(x)(y))$ is a regular automorphism of $B\times \bbA^N_\bfk$ and, in most cases, we observe the
same phenomenon:  the degrees of the restrictions $g^n_{\vert \{x_0\} \times \bbA^N_\bfk}$ are bounded, but  the degrees of
$g^n$ are not. 

 If $X$ is an affine variety over $\bfk$ with a morphism $\pi: X\to B$, we denote by $\eta$ the generic point of $B$ and
$X_{\eta}$ the generic fiber of $\pi$.
If $G$ is a subgroup of $\Aut_{\pi}(X)$, then its restriction to $X_{\eta}$ may have bounded degree even if 
$G$ is not a subgroup of $\Aut(X)$ of bounded degree: this is shown by the previous example.  

The next proposition provides a converse result. 
To state it, we make use of the following notation. Let $B$ be an irreducible affine variety, and let $\sO(B)$ be the $\bfk$-algebra of its regular functions. 
By definition, $\bbA_{B}^N$ denotes the affine space
$\Spec \sO(B)[x_1,\dots,x_N]$
over the ring $\sO(B)$ and $\Aut_{B}(\bbA_{B}^N)$ denotes the group of {\bf{$\sO(B)$-automorphisms}} of $\bbA_{B}^N$; $\Aut_{B}(\bbA_{B}^N)$ is just another notation for $\Aut_{\pr_B}(\bbA^N\times B)$, where $\pr_B \colon \bbA^N\times B \to B$ is the projective map to the second factor (see the first lines of \S~\ref{par:base_change_and_fibrations}).
Similarly, $\GL_{N}(\sO(B))$ is the linear group over the ring $\sO(B)$. The inclusion $\GL_{N}(\sO(B))\subset \Aut_{B}(\bbA_{B}^N)$ is an embedding of ind-groups.
Indeed, the group $\GL_N(\sO(B))$ may be identified to  space of morphisms $\Mor(B, \GL_N(\bfk))$ between the affine varieties $B$ and $\GL_N(\bfk)$. As a subgroup of $\Aut_{B}(\bbA_{B}^N)$
it is closed, because it coincides with 
\begin{align*}
\{ f\in & \Aut_B(\bbA^N_B)\; ;\;    \deg f,\deg f^{-1}\le 1 \text{ and } f \text{ fixes} \\ & \text{the zero section } 0\times B\subseteq \bbA^N\times B=A^N_B \}.
\end{align*}


\begin{pro}\label{proweil} Let $X$ be an irreducible and normal affine variety over $\bfk$ with a dominant morphism $\pi: X\to B$  to an irreducible affine variety  $B$ over $\bfk$. 
Let $\eta$ be the generic point of $B$ and $X_{\eta}$ the generic fiber of $\pi$.
Let $G$ be a subgroup of $\Aut_{\pi}(X)$ whose restriction to $X_{\eta}$ is of bounded degree. 
Then there exists 
\begin{itemize}
\item[(a)] a nonempty affine open subset $B'$ of $B$,  
\item[(b)] an embedding $\tau: X_{B'}:=\pi^{-1}(B')\hookrightarrow \bbA_{B'}^r$ over $B'$ for some $r\geq 1$, 
\item[(c)] and an embedding $\rho: G\hookrightarrow \GL_{r}(\sO(B'))\subseteq \Aut_{B'}(\bbA_{B'}^r)$,
\end{itemize}
such that $\tau\circ g=\rho(g)\circ \tau$  for every $g\in G$.
\end{pro}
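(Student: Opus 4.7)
The idea is to construct, on the generic fibre $X_\eta$, a finite-dimensional $G$-stable $K$-subspace $W\subseteq\sO(X_\eta)$ (with $K:=\bfk(B)$) which generates $\sO(X_\eta)$ as a $K$-algebra, and then to spread $W$ out, over a suitable affine open $B'\subseteq B$, to a $G$-stable free $\sO(B')$-submodule of $\sO(X_{B'})$ of rank $N=\dim_K W$. This module will simultaneously give rise to the closed immersion $\tau$ and to the linear representation $\rho$.

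For the construction of $W$, fix a closed embedding $X\hookrightarrow\bbA^M_\bfk$ with coordinates $y_1,\dots,y_M$, so that $X_\eta\hookrightarrow\bbA^M_K$. Let $V_1$ be the $K$-span of $\{1,y_1,\dots,y_M\}$ and $V_D$ the $K$-span of monomials of degree at most $D$ in the $y_i$, where $D$ is the uniform bound on $\deg(g|_{X_\eta})$ for $g\in G$. Then $g^*(V_1)\subseteq V_D$ for every $g\in G$, and
\[
W\ :=\ \sum_{g\in G}g^*(V_1)\ \subseteq\ V_D
\]
is finite-dimensional over $K$, is $G$-invariant (for $h\in G$, $h^*W=\sum_{g}(gh)^*V_1=W$), contains $V_1$, and hence generates $\sO(X_\eta)$ as a $K$-algebra. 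Pick a $K$-basis $f_1,\dots,f_N$ of $W$; after multiplying by nonzero elements of $\sO(B)$, we may assume $f_i\in\sO(X)$.

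The technical heart of the proof is to realise $W$ as the generic fibre of a $G$-stable free $\sO(B')$-module inside $\sO(X_{B'})$. The natural candidate is $\widetilde W:=W\cap\sO(X)\subseteq\sO(X_\eta)$, an $\sO(B)$-submodule which is $G$-invariant (both $W$ and $\sO(X)$ are) and satisfies $\widetilde W\otimes_{\sO(B)}K=W$. To show $\widetilde W$ becomes finitely generated over a suitable localisation of $\sO(B)$, one invokes Noether normalisation: after shrinking $B$, there are $u_1,\dots,u_r\in\sO(X)$ ($r=\dim X-\dim B$) algebraically independent over $\sO(B)$ with $\sO(X)$ module-finite over $R:=\sO(B)[u_1,\dots,u_r]$; shrinking further and using normality of $X$ together with generic freeness, one may assume $\sO(X)=\bigoplus_{k=1}^s R\,\beta_k$ as a free $R$-module. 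Expressing each monomial $y^I$ of degree $\leq D$ in the basis $\{\beta_k\}$ yields a uniform integer $D''$ with $V_D\subseteq\bigoplus_k K[u]_{\leq D''}\beta_k$, and hence
\[
\widetilde W\ \subseteq\ V_D\cap\sO(X)\ \subseteq\ \bigoplus_{k=1}^s\sO(B)[u]_{\leq D''}\,\beta_k,
\]
a finitely generated free $\sO(B)$-module. By Noetherianity $\widetilde W$ is finitely generated, and by one final application of generic freeness there is an affine open $B'\subseteq B$ such that $\widetilde W|_{B'}=W\cap\sO(X_{B'})$ is free over $\sO(B')$ of rank $N$.

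Fix an $\sO(B')$-basis $(\tilde f_j)$ of $\widetilde W|_{B'}$, which is simultaneously a $K$-basis of $W$. The $G$-action on $\widetilde W|_{B'}$ yields a homomorphism $\rho\colon G\to\GL_N(\sO(B'))$ by $g^*\tilde f_i=\sum_j\rho(g)_{ji}\tilde f_j$. Since the $\tilde f_j$ generate $\sO(X_\eta)$ over $K$, a further shrinking of $B'$ ensures they generate $\sO(X_{B'})$ as an $\sO(B')$-algebra, so $\tau\colon X_{B'}\hookrightarrow\bbA^N_{B'}$, $p\mapsto(\tilde f_1(p),\dots,\tilde f_N(p))$, is a closed immersion over $B'$. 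The identity $\tau\circ g=\rho(g)\circ\tau$ holds by construction, and $\rho$ is injective because $\rho(g)=\id$ forces $g^*$ to fix the generators $\tilde f_i$ of $\sO(X_{B'})$, hence $g|_{X_{B'}}=\id$, and then $g=\id$ on $X$ by irreducibility. The main obstacle is the finite generation of $\widetilde W$: without the Noether-normalisation reduction confining $\widetilde W$ inside a fixed finitely generated $\sO(B)$-module, the matrix entries $\rho(g)_{ji}$ could accumulate arbitrary denominators as $g$ varies in $G$.
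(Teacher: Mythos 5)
Your overall strategy is sound and genuinely different from the paper's: instead of passing to a relative projective compactification $\overline{X}\to B$ and trapping the $G$-orbit of a generating subspace inside the coherent $\sO(B)$-module $H^0(\overline{X},mD)$, you trap $\widetilde W=W\cap\sO(X)$ inside a finitely generated $\sO(B)$-module produced by a relative Noether normalization. Both routes hinge on the same point, namely finite generation over $\sO(B')$ of a $G$-stable module containing algebra generators, and your construction of $W$, the identity $\widetilde W\otimes_{\sO(B)}K=W$, the final spreading-out steps, the closed immersion $\tau$, and the injectivity of $\rho$ are all fine (up to the routine bookkeeping that $g\mapsto$ matrix of $g^*$ is an anti-homomorphism, so one should take transposes or use $(g^{-1})^*$; the paper is equally loose here).

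However, one step as written is false: you cannot, in general, arrange $\sO(X)=\bigoplus_{k=1}^{s}R\,\beta_k$ to be a \emph{free} $R$-module, $R=\sO(B)[u_1,\dots,u_r]$, after shrinking $B$. Freeness at the generic fibre means $\sO(X_\eta)$ is free over $K[u_1,\dots,u_r]$, which (finite module over a regular ring, miracle flatness) forces $X_\eta$ to be Cohen--Macaulay; normality only gives $S_2$, and normal non-Cohen--Macaulay varieties exist in dimension $\geq 3$ (e.g.\ cones over abelian surfaces), for which \emph{no} Noether normalization is flat. Nor does generic freeness help: it gives freeness over a localization $\sO(B)_b$ (typically of infinite rank), never over $R$, and you may not invert elements of $R\setminus\sO(B)$ because the conclusion must hold on all of $\pi^{-1}(B')$. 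Fortunately the gap is local and easily repaired: you only need an \emph{embedding} of $\sO(X)$ into a finite free $R$-module. Since $\sO(X)$ is a finite torsion-free $R$-module (a domain containing $R$), the inclusion $\sO(X)\hookrightarrow \sO(X)\otimes_R \mathrm{Frac}(R)\simeq \mathrm{Frac}(R)^{s}$ followed by clearing denominators gives $\sO(X)\subseteq\bigoplus_{k=1}^{s}R\,\gamma_k$ inside $\mathrm{Frac}(\sO(X))$. The coefficient-comparison you wanted from uniqueness of the basis representation still works with this embedding: any element of $\sO(X)$ has its $\gamma_k$-coefficients in $R$ automatically, and your degree bound on the monomials spanning $V_D$ yields $V_D\cap\sO(X)\subseteq\bigoplus_k\sO(B)[u]_{\leq D''}\,\gamma_k$, a finite $\sO(B)$-module. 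From there the rest of your argument goes through verbatim (and, as a bonus, your proof then does not use normality of $X$ at all).
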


\noindent{\bf{Notation.--}} 
For $f\in \Aut(X)$ and   $\xi\in O(X)$ (resp. in $\bfk(X)$), 
we denote by $f^*\xi$ the function $\xi\circ f$. The field of constant functions is identified with $\bfk\subset O(X)$.

\proof[Proof of Proposition \ref{proweil}]
Shrinking $B$, we assume $B$ to be normal. 
 
Pick any closed embedding $X\hookrightarrow\bbA^\ell_B\subseteq \bbP^\ell_B$ over $B$. 
Let $X'$ be the Zariski closure of $X$ in $\bbP^\ell_B$. Let $\overline{X}$ be the normalization of $X'$, with the
structure morphism $\overline{\pi}: \overline{X}\to B$; thus, $\overline{\pi}: \overline{X}\to B$ is a normal and projective scheme over $B$ containing $X$ as a Zariski open subset.
By Proposition 3.1 in \cite[Chap. II]{Hartshorne1970}, 
$D:=\overline{X}\setminus X$ is an effective Weil divisor of $\overline{X}$.
Denote by $\overline{X}_{\eta}$ the generic fiber of $\overline{\pi}$ and by $D_{\eta}$ the generic fiber of $\overline{\pi}|_D$.
Shrinking $B$ again if necessary, we may assume that all irreducible components of $D$ meet the generic fiber, i.e. $D=\overline{D_{\eta}}$.

Write $X=\Spec A$, where $A = O(X)$.
Let $M$ be a finite dimensional subspace of $A$ such that $1\in M$ and $A$ is generated by $M$ as a $\bfk$-algebra. 
Since the action of $G$ on $X_{\eta}$ is of bounded degree, there exists $m\geq 0$ such that the divisor 
\begin{equation}
({\rm Div}(g^*v)+mD)|_{\overline{X}_{\eta}}
\end{equation}
is effective for every $v\in M$ and $g\in G$.
Now, consider ${\rm Div}(g^*v)+mD$ as a divisor of $\overline{X}$ and write ${\rm Div}(g^*v)+mD=D_1-D_2$ where $D_1$ and $D_2$ are effective and have no common irreducible component. Since $g\in \Aut_{\pi}(X)$, we get $g^*v\in A$ and $D_2\cap X=\emptyset$. Moreover, $D_2\cap \overline{X}_{\eta}=\emptyset$.
So $D_2$ is contained in $\overline{X}\setminus X$, but then we deduce that $D_2$ is empty because $\overline{X}\setminus X$ is covered by $D$ and $D=\overline{D_{\eta}}$.
 
 Observe that $H^0(\overline{X}, mD)$ is a finitely generated $O(B)$-module.
Denote  by $N$ the $G$-invariant $\sO(B)$-submodule of $A$ generated by the $g^*v$, for $g\in G$ and $v\in M$. 
Since $N\subseteq H^0(\overline{X}, mD)$, $N$ is a finitely generated $\sO(B)$-module. 
Let $r$ be the dimension of the $\bfk(B)$-vector space 
$N\otimes_{\sO(B)}\bfk(B)$. Fix a basis $(w_1, \ldots, w_r)$  of this space made of elements $w_i\in N$. 
After shrinking $B$, we may assume that $N$ is a free $\sO(B)$-module generated by $w_1,\dots, w_r$. Let $W$ be a 
free $\sO(B)$-module of rank $r$ with a basis $(z_1, \ldots, z_r)$; thus, $W=\oplus_{i=1}^r\sO(B) z_i$ and
\begin{equation}
\Spec \sO(B)[W]=\Spec \sO(B)[z_1,\dots, z_r]=\bbA^r_{\sO(B)}.
\end{equation}
Let $\tau_W^*: W\to N$ be the isomorphism of modules defined by $\tau_W^*(z_i)= w_i$. The action of $G$ 
on $N$ induces a representation $\rho\colon G\to \GL_B(W)$ such that $\tau_W^*\circ\rho(g)= g^*\circ \tau_W^*$. %

Using the basis $(z_i)$, we obtain a homomorphism $\rho\colon G\to\GL_r(\sO(B))$. Let $\tau$ 
be the morphism $X\hookrightarrow \Spec \sO(B)[W]=\bbA^r_{\sO(B)}$ over $B$ induced by $\tau_W^*: W\to N\subseteq A$. The group $\GL_r(\sO(B))$ can naturally be identified to a subgroup of $\Aut_B(  \bbA^r_{\sO(B)} )$, 
and then $\tau\circ g=\rho(g)\circ \tau$ for every $g\in G$.
\endproof	
	
\subsection{Orbits}\label{par:orbits}
If $S$ is a subset of $\Aut(X)$ and $x$ is a point of $X$  the $S$-{\bf{orbit}} of $x$ is the
subset $S(x)=\{f(x);\; f\in S\}$. 
Let $V$ be an irreducible algebraic subvariety of $\Aut(X)$ containing $\id$. Set $W= V\cdot V^{-1}$; it is 
a constructible subset of $\Aut(X)$ containing $V$ (for $\id \in V$).
Then,  the group $\langle V\rangle$ is the union
of the sets
\begin{equation}
W^k=\{ f_1\circ \cdots \circ f_k; \; f_j\in W \; {\text{for all}} \; j \}.
\end{equation} 
 Since $W$ contains $\id$, 
the $W^k$ form a non-decreasing sequence 
\begin{equation}\label{eq:inclusion-Wk}
W^0=\{\id\}\subset W\subset W^2\subset \cdots \subset W^k\subset \cdots
\end{equation} 
of constructible subsets of $\Aut(X)$; their  closures are irreducible, because so is $V$.
 In particular, $k\mapsto \dim(W^k)$ is non-decreasing. 

The $W^k$-orbit of a point $x\in X$  is  the image of $W^k\times \{ x\}$ by the morphism $\Aut(X)\times X\to X$ defining the action on $X$:
applying Chevalley's theorem one more time, $W^k(x)$ is a constructible subset of $X$.
If $U\subset X$ is open, its $W^k$-orbit $W^k(U)$   is open too; 
thus, $\langle W\rangle (U)=\cup_{k\geq 0}W^k(U)$ is open in $X$. 

An increasing union of irreducible constructible sets needs not be stationary:  the sequence of subsets 
of $\bbA^2_\C$ defined by $Z_k=\left(\bbA^2_\C\setminus \{y=0\}\right)\cup_{j=1}^k \{(j,0)\}$ provides such an example. 
However, we shall see in the next proposition that the $W^k(x)$ are better behaved. 

Let $\pi_1$ and $\pi_2$ be the projections from $X\times X$ to the first and second factor, respectively. 
Let $\Delta_X$ be the diagonal in $X\times X$; if $Y$ is a subvariety of $X$, set
\begin{equation}
\Delta_Y=\pi_1^{-1}(Y)\cap \Delta_X=\{(y,y)\in X\times X; \; y\in Y\}\subset X\times X.
\end{equation} 
Consider the morphism $\Phi\colon \Aut(X)\times X\to X\times X$ defined by 
\begin{equation}
\Phi(g,x)=(x,g(x)),
\end{equation}
and set $\Gamma_i=\Phi(W^i\times X)$ for $i\in \Z_{>0}$. The family $(\Gamma_i)_{i \in \N}$ forms a 
non-decreasing sequence of constructible sets; we denote by $\Gamma_\infty$ their union. Then, consider the action
of $\Aut(X)$ on $X\times X$ given by 
$g\cdot(x,y)=(x,g(y))$. By construction, $\Gamma_i=W^i\cdot\Delta_X$ and
$\Gamma_\infty =\langle W\rangle\cdot\Delta_X$; similarly  $W^i\cdot \Delta_Y=\Gamma_i\cap \pi_1^{-1}(Y)$  and $\langle W\rangle\cdot \Delta_Y=\Gamma_{\infty}\cap \pi_1^{-1}(Y)$ for every subvariety $Y\subset X$.

\begin{lem}\label{lemgminfcon}The subset $\Gamma_{\infty}$ of $X\times X$ is constructible.
	\end{lem}

\begin{proof} Let us prove, by an induction on $\dim(Y)$, that $\pi_1^{-1}(Y)\cap \Gamma_{\infty}$ is constructible for every irreducible 
subvariety $Y\subseteq X$. 
By convention, set $\dim Y=-1$ when $Y=\emptyset$. So, the case $\dim Y=-1$ is trivial. Now assume that $\dim Y\geq 0$
and that the result holds in dimension $<\dim(Y)$. Set $Z_Y=\overline{\langle W\rangle\cdot \Delta_Y}$; this set is invariant 
under the action of $\langle W\rangle$ on $X\times X$.
Since $\overline{W^i\cdot\Delta_Y}$ is irreducible and increasing for each $i\geq 0$, there is $m\geq 0$, such that 
\begin{equation}
Z_Y=\overline{\langle W\rangle\cdot \Delta_Y}=\overline{W^i\cdot \Delta_Y}\quad  (\forall  i\geq m).
\end{equation} 
Then there is a dense open subset $U_Y$ of $Z_Y$ which is contained in $W^m\cdot \Delta_Y$, hence in $\langle W\rangle\cdot \Delta_Y$. Shrinking $U_Y$ if necessary, we may assume that  $\pi_1(U_Y)$ is open in $Y$. 
Then $Y\setminus \pi_1(U_Y)$ is a closed subset of $X$, the irreducible components of which have dimension $< \dim Y$. By the induction hypothesis, $\pi_1^{-1}(Y\setminus \pi_1(U_Y))\cap \Gamma_{\infty}$ is constructible. We also know that $\pi_1^{-1}(\pi_1(U_Y))\cap \Gamma_{\infty}=\langle W \rangle\cdot U_Y$ is an open subset of $Z_Y$. Thus, $\pi_1^{-1}(Y)\cap \Gamma_{\infty}=(\pi_1^{-1}(Y\setminus \pi_1(U_Y))\cap \Gamma_{\infty})\cup (\pi_1^{-1}(\pi_1(U_Y))\cap \Gamma_{\infty})$  is constructible.
\end{proof}

\begin{pro}\label{pro:dim-orbits} The orbits $W^k(x)$ satisfy the following properties. 
\begin{enumerate}
\item The function $k\in \Z_{>0}\mapsto \dim(W^k(x))$ is non-decreasing.
\item The function $x\in X\mapsto \dim(W^k(x))$ is  lower semi-continuous in the Zariski topology: the subsets
	$\{x\in X\, ; \; \dim(W^k(x))\leq n\}$ are Zariski closed for all pairs $(n,k)$ of integers. 
\item The integers 
$$
s(x):= \max_{k\geq 0} \{  \dim(W^k(x))  \}\quad \text{and} \quad s_X:= \max_{x\in X} \{ s(x)\}$$ are bounded from above by $\dim(X)$.
\item There is a Zariski dense open subset $\sU$ of $X$ and an integer $k_0$ such that
	$\dim(W^k(x)) = s_X$ for all $k\geq k_0$ and all $x\in \sU$.
\item There is an integer $\ell\geq 0$, such that for every $x$ in $X$, $W^\ell(x)=\langle W \rangle (x)$ and $W^\ell(x)$ 
is an open subset of $\overline{\langle W \rangle (x)}$.
\end{enumerate}
\end{pro}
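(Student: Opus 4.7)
The plan is to establish the five parts in order, with Chevalley's theorem (constructibility of images and upper semi-continuity of fiber dimension) and Noetherianity as the main tools. Throughout I replace $W^k$ by its Zariski closure $\overline{W^k}$ in $\Aut(X)$, an irreducible algebraic subvariety in which $W^k$ is dense constructible; in particular $\dim W^k(x)=\dim\overline{W^k}(x)$ and Theorem~\ref{thm:ind-group} gives the required morphisms of algebraic varieties. Parts (1) and (3) are then immediate: $\id\in V\subseteq W$ gives $W^k\subseteq W^{k+1}$, hence $W^k(x)\subseteq W^{k+1}(x)$; and every orbit $W^k(x)$ sits in $X$.

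For (2), I form the closed incidence variety
\[
\widetilde{\Gamma}_k=\{(g_1,g_2,x)\in\overline{W^k}\times\overline{W^k}\times X : g_1(x)=g_2(x)\},
\]
that is, the preimage of the diagonal under the morphism $(g_1,g_2,x)\mapsto(g_1(x),g_2(x))$. Its projection to $\overline{W^k}\times X$ sending $(g_1,g_2,x)$ to $(g_1,x)$ has fiber $\alpha_x^{-1}(\alpha_x(g_1))$ above $(g_1,x)$, where $\alpha_x(g):=g(x)$. By Chevalley, $h(g,x):=\dim\alpha_x^{-1}(\alpha_x(g))$ is upper semi-continuous on $\overline{W^k}\times X$, and since the generic fiber of $\alpha_x$ realizes $\min_g h(g,x)$ one has $\dim W^k(x)=\dim\overline{W^k}-\min_g h(g,x)$. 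The open set $\{h<c\}\subseteq\overline{W^k}\times X$ projects to $\{x:\min_g h(g,x)<c\}\subseteq X$, and this projection is open because $\overline{W^k}\times X\to X$ is flat of finite type. This yields the lower semi-continuity $\{x:\dim W^k(x)\leq n\}$ closed, as asserted in (2).

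For (4), set $m_k:=\max_{x\in X}\dim W^k(x)$. By (2) and the irreducibility of $X$, the set $\sU_k:=\{x:\dim W^k(x)=m_k\}$ is open and non-empty, hence dense. By (1) the sequence $m_k$ is non-decreasing, and by (3) it is bounded above by $\dim X$, so $m_k$ stabilizes to an integer equal to $s_X$. For $k\geq k_1$ large enough that $m_k=s_X$ and for $x\in\sU_k$, the sandwich $s_X=\dim W^k(x)\leq\dim W^{k+1}(x)\leq s_X$ forces $x\in\sU_{k+1}$, so the chain $(\sU_k)_{k\geq k_1}$ is non-decreasing; Noetherianity applied to the decreasing closed complements then produces the open dense $\sU$ and index $k_0\geq k_1$ demanded in (4).

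The main obstacle is (5), because in general an increasing sequence of constructible subsets of a variety with stable Zariski closure need not stabilize. Fix $x\in X$. By (1) and (3) the irreducible chain $\overline{W^k(x)}$ has eventually constant dimension and hence is eventually constant, equal to some irreducible $Y_x\subseteq X$. For $k\geq k_0(x)$, $W^k(x)$ is dense constructible in $Y_x$ so it contains a non-empty open of $Y_x$, and Noetherianity on the decreasing closed complements yields a single open $O\subseteq Y_x$ with $O\subseteq W^k(x)$ for all $k$ sufficiently large. Since $Y_x=\overline{\langle W\rangle\cdot x}$ and $\langle W\rangle$ is a group, $Y_x$ is $\langle W\rangle$-invariant, and the commutativity of $\langle W\rangle$ lets one replace $O$ by a $W$-invariant dense open $O^*\subseteq Y_x$ still contained in $W^k(x)$ for $k$ large; the increments $W^{k+1}(x)\setminus W^k(x)$ then lie in the strictly lower-dimensional closed subset $Y_x\setminus O^*$, and a descending induction on $\dim Y_x$ concludes. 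The delicate step---and the essential use of the commutativity hypothesis on $V$---is the construction of the $W$-invariant open $O^*$ and making the induction rigorous.
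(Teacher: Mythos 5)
Parts (1)--(4) of your argument are correct and rest on the same tools the paper invokes (Chevalley's constructibility theorem and semicontinuity of fibre dimension); your treatment of (2) is just a hands-on version of the citation in the paper. Two small repairs: in (2), apply Chevalley's semicontinuity to the \emph{local} fibre dimension $(g,x)\mapsto \dim_{(g,x)}\bigl(\text{fibre of }(g,x)\mapsto(g(x),x)\bigr)$ rather than to the dimension of the whole fibre $\alpha_x^{-1}(\alpha_x(g))$ --- the local version is the one that is upper semicontinuous on the source; since every component of every fibre of $\alpha_x\colon\overline{W^k}\to\overline{W^k(x)}$ has dimension at least $\dim\overline{W^k}-\dim W^k(x)$, with equality generically, the minimum over $g$ is unchanged and the rest of your argument goes through verbatim. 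In (4), once the sets $\sU_k$ are non-decreasing you may simply take $\sU=\sU_{k_1}$; no further noetherian step is needed.

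The genuine gap is in (5), and you flag it yourself. You are right that an increasing chain of constructible sets with stabilizing closures need not stabilize, so more than the paper's one-line ``noetherianity'' is required; but the route you sketch cannot be completed as written. If a non-empty set $O^*\subseteq W^k(x)$ is invariant under $W$, then (since $\id\in W$, $W=W^{-1}$, and $W$ generates) it is invariant under $\langle W\rangle$; as $O^*\subseteq\langle W\rangle(x)$ and the latter is a single orbit, invariance forces $\langle W\rangle(x)\subseteq O^*\subseteq W^k(x)$. So constructing your $W$-invariant dense open $O^*$ is not a ``delicate step'' inside a larger induction: it \emph{is} the assertion to be proved (and once you have it, no descending induction is needed), and you do not construct it. A complete argument runs as follows. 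The closures $\overline{W^j(x)}$ form a non-decreasing chain of irreducible closed subsets of $X$, hence equal a fixed $Y$ for all $j\geq k$ with $k$ large; $W^k(x)$ is constructible and dense in $Y$, so it contains a dense open subset $U$ of $Y$. Any $g\in\langle W\rangle$, say $g\in W^m$, satisfies $g(W^k(x))\subseteq W^{m+k}(x)\subseteq Y$ and likewise for $g^{-1}\in W^m$, so $g(Y)=Y$ and therefore $U\cap g(U)\neq\emptyset$: there are $h_1,h_2\in W^k$ with $g(h_2(x))=h_1(x)$. Commutativity of $\langle V\rangle=\langle W\rangle$ gives $h_2(g(x))=h_1(x)$, hence $g(x)=(h_2^{-1}\circ h_1)(x)\in W^{2k}(x)$, using $W^{-1}=W$. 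Thus $\langle W\rangle(x)=W^{2k}(x)$. Note where the hypotheses enter: $\id\in V$, the symmetry of $W=V\cdot V^{-1}$, and the commutativity; without the latter the cancellation of $h_2$ is unavailable, and this is precisely the content hidden behind the word ``noetherianity'' in the paper.
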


This result and its proof below are analogous to~\cite[Prop. 7.1.2]{FK18} and~\cite[Sec. 1]{AFKKZ}. 

\begin{proof} 
The first assertion follows from the inclusions~\eqref{eq:inclusion-Wk}, and the third one is obvious.
Since  the action $(f,x)\in W^k\times X \mapsto f(x)\in X$ is a morphism, 
the second and fourth assertions follow from Chevalley's constructibility result 
and  the semi-continuity of the dimension of the fibers (see \cite[II, Exercise 3.19]{Hartshorne1977} 
and \cite[Section I.6.3, Corollary]{Shafarevich1} respectively).  	 
By Lemma \ref{lemgminfcon}, $\Gamma_{\infty}$ is constructible. Since it is the increasing union of the  constructible subsets $\Gamma_i$, there is an integer $\ell$ such that $\Gamma_\infty=\Gamma_i$ for $i\geq \ell$. 
Then, $W^\ell(x)=\langle W \rangle (x)$ because  $W^i(x)=\pi_2(\Gamma_i\cap \pi_1^{-1}\{x\})$ and
$\langle W\rangle (x)=\pi_2(\Gamma_\infty\cap \pi_1^{-1}\{x\})$.
Now, the constructible set  $W^\ell(x)$ contains a dense open subset  $U$ of $\overline{\langle W \rangle (x)}$; since $\langle W\rangle$ acts transitively on $W^\ell(x)$, $W^\ell(x)=\langle W\rangle (U)$ is open in $\overline{\langle W \rangle (x)}$.		 
\end{proof}

\subsection{Open orbits}\label{par:s=d}
Let us assume in this paragraph that $s_X=\dim X$: there is an orbit
$W^{k}(x_0)$ which is open and dense and coincides with $\langle W \rangle (x_0)$. We fix such a pair $(k,x_0)$. 
Let $f$ be an element of $\langle W \rangle$. Since  $f(x_0)$ is in the set $W^{k}(x_0)$, there is an element 
$g$ of $W^k$ such that $g(x_0)=f(x_0)$,  i.e. $g^{-1}\circ f(x_0)=x_0$. 
By commutativity, $(g^{-1}\circ f)(h(x_0))=h(x_0)$ for every $h$ in $W^{k}$, and this shows that $g^{-1}\circ f=\id$ 
because $W^k(x_0)$ is dense in $X$. Thus, $\langle W\rangle$ coincides with $W^k$, and 
$\langle W\rangle=\langle V\rangle$ is an irreducible algebraic subgroup of the ind-group $\Aut(X)$. 

Thus, Theorem~B is proved in case $s_X=\dim X$. The proof when $s_X< \dim X$ occupies the next section, and is achieved in 
\S~\ref{par:conclusion-thmB}.

%

\subsection{No dense orbit}
Assume now that there is no dense orbit; in other words, $s_X< \dim(X)$. 
Fix an integer $\ell>0$ and a $W$-invariant open subset $\sU\subset X$ 
such that  
\begin{equation}
s(x)=s_X \; {\text{ and }} W^\ell(x)=\langle W\rangle(x)
\end{equation}
for every $x\in \sU$ (see Proposition~\ref{pro:dim-orbits}, assertions~(4) and~(5)).

	
\subsubsection{A fibration} We start with 
a construction which is reminiscent of Rosenlicht's quotient theorem~\cite{Rosenlicht:1956}: instead of looking at 
orbits of an algebraic group $G$, we consider ``orbits'' of the commutative set of transformations $W^\ell$. 

Let $C$ be an irreducible algebraic subvariety of $X$ of codimension $s_X$ that intersects the general orbit $W^\ell(x)$ transversally (in $k$ points).  As in \S~\ref{par:orbits}, denote by $\pi_1: X\times X \to X$   the projection to the first factor. 
The morphism 
\begin{equation}\pi':=(\pi_1)|_{(X\times C)\cap \Gamma_{\ell}}: (X\times C)\cap \Gamma_{\ell}\to X
\end{equation} is generically finite of degree $k$.
So there is a non-empty open subset $\sV$ of $\sU$ such that $\pi'|_{\pi'^{-1}(\sV)}: \pi'^{-1}(\sV)\to \sV$ is finite \'etale.  
Observe that for every $g\in \langle W\rangle$, $g(\sV)$ is open in $\sU$ and $\pi'|_{\pi'^{-1}(g(\sV))}$ is finite \'etale of degree $k$. Set $Y:=\langle W\rangle(\sV)$; it is open in $\sU$ and satisfies
\begin{itemize}	
\item[(i)]  for each $x \in Y$
the intersection of $C$ and $W^\ell(x)$ is transverse and contains exactly $k$
points;	
\item[(ii)] $Y$ is $W$-invariant.
\end{itemize}

To each point $x\in Y$, we associate the intersection $C\cap W^\ell(x)$, viewed as a point in the space $C^{[k]}$ of cycles of length $k$ and dimension $0$ in 
$C$. This gives a dominant morphism 
\begin{equation}
\pi \colon Y\to B
\end{equation}
where, by definition, $B$ is the irreducible variety $B= \overline{\pi(Y)}\subset C^{[k]}$.
The group $\langle W\rangle$ is now contained 
in $\Aut_\pi(Y)$. Shrinking $B$ and $Y$ accordingly, we may assume that $B$ is normal
and that $\pi$
is surjective.  Let $\eta$ be the generic point of $B.$
	
The fiber $\pi^{-1}(b)$ of $b\in B$, we denote by $Y_b$. By construction, for every $b\in B(\bfk)$,
$Y_b$ is an orbit of $\langle W\rangle$; 
and Section~\ref{par:s=d} shows that $Y_b$ is isomorphic to the image $\langle W\rangle_b$
 of $\langle W\rangle$  in $\Aut(Y_b)$: this group $\langle W\rangle_b$ coincides with
the image of $W^\ell$  in $\Aut(Y_b)$ and the action of $\langle W\rangle$ on $Y_b$ corresponds to the action of $\langle W\rangle_b$ on itself 
by translation. 
Thus, Section~\ref{par:s=d} implies the following properties 
\begin{enumerate}
\item every fiber of $\pi$, in particular its generic fiber, is geometrically irreducible;  
\item the generic fiber of $\pi$ is normal and affine, shrinking $B$ (and Y accordingly) again, we may assume $B$ and $Y$ 
to be normal and affine;
\item  the action of $\langle W\rangle$ on the generic fiber $Y_\eta$ has bounded degree. 
\end{enumerate}

\subsubsection{Reduction to $Y= U_B\times_B (\G_{m,B}^s)$}\label{par:Reduction_of_Y} 
In this section, the variety $Y$ will be modified, so as to reduce our study to the case when $Y$ is an abelian group scheme over~$B$. Note that $B$ and $Y$ will be modified several times in this paragraph, keeping the same names.

By Proposition \ref{proweil}, after shrinking $B$, there exist an embedding $\tau: Y\hookrightarrow \bbA_{B}^N$ for some $N\geq 0$ and a homomorphism $\rho:  \langle W\rangle\hookrightarrow \GL_{N}(\sO(B))    \subseteq \Aut_B(\bbA_{B}^N)$ such that  
\begin{equation}
\tau \circ g=\rho(g)\circ \tau \; \quad (\forall g\in \langle W\rangle).
\end{equation}
Via $\tau$, we view $Y$ as a $B$-subscheme of $\bbA_{B}^N$, and via $\rho$ we view $\langle W\rangle$ in $\GL_{N}(\sO(B))$.
Consider the inclusion of $\GL_N(\sO(B))$ into $\GL_N(\bfk(B))$, and compose it with the embedding of $W$ into $\GL_N(\sO(B))$. Denote by $\langle W\rangle_{\eta}$ the Zariski closure of $\langle W\rangle$ in $\GL_{N}(\bfk(B),Y_{\eta})\subseteq \Aut(Y_{\eta})$, where $\GL_{N}(\bfk(B),Y_{\eta})$ is the subgroup of $\GL_{N}(\bfk(B))$ which preserves $Y_{\eta}$. 
There is a natural inclusion of sets $W\hookrightarrow W\otimes_{\bfk}\bfk(B)$: a point $x$ of $W$, viewed as a morphism $x\colon \Spec \bfk(x) \to W$, is mapped to the point
\begin{equation}
x^B: \Spec \bfk(x)(B\otimes_{\bfk}\bfk(x)) =\Spec {\rm Frac}(\bfk(x)\otimes_{\bfk} \bfk(B)) \to W\otimes_{\bfk} \bfk(B),
\end{equation}
where $\bfk(x)(B\otimes_{\bfk}\bfk(x))$ is the function field of $B\otimes_{\bfk}\bfk(x)$ which is the variety over the field $\bfk(x)$; note that $\bfk$ being algebraically closed, 
$B\otimes_\bfk\bfk(x)$ is irreducible over $\bfk(x)$ and $\bfk(x)\otimes_{\bfk} \bfk(B)$ is an integral domain.
The image of this inclusion is Zariski dense in $W\otimes_{\bfk}\bfk(B)$.
The morphism $W\hookrightarrow \GL_{N}(\bfk(B),Y_{\eta})$ naturally extends to a morphism $W\otimes_{\bfk}\bfk(B)\hookrightarrow \GL_{N}(\bfk(B),Y_{\eta})$.
It follows that $\langle W\rangle_{\eta}$ is the Zariski closure of $\langle W\otimes_{\bfk}\bfk(B)\rangle$ in $\GL_{N}(\bfk(B),Y_{\eta})$. 

Since $W\otimes_{\bfk}\bfk(B)$ is geometrically irreducible, 
$\langle W\rangle_{\eta}$ is a geometrically irreducible commutative linear algebraic group over $\bfk(B)$. 
As a consequence (\cite{Milne}, Chap. 16.b), there exists a finite extension $L$ of $\bfk(B)$ and an integer $s\geq 0$ such that 
\begin{equation}\label{equl}
\langle W\rangle_{\eta}\otimes_{\bfk(B)}L\simeq U_{L}\times \G_{m, L}^s 
\end{equation}
where $U_L$ is a unipotent commutative linear algebraic group over $L$.  

Let $\psi:B'\to B$ be the normalization of $B$ in $L$.
We obtain a new fibration $\pi'\colon Y\times_B B' \to B'$, together with an embedding $\iota_\psi$ of $\Aut_\pi(Y)$ in $\Aut_{\pi'}(Y\times_B B')$; by Lemma~\ref{lem:basechange}, the subgroup 
$\langle W\rangle$ has bounded degree if and only if its image $\iota_\psi\langle W\rangle$ has bounded degree too.
Because the generic fiber of $\pi$ is geometrically irreducible, $Y\times_B B'$ is irreducible.
After such a base change, we may assume that $\langle W\rangle_{\eta}\simeq U_{\eta}\times \G_{m, \bfk(B)}^s,$
 where $U_{\eta}$ corresponds to the group $U_L$ of Equation~\eqref{equl}.
Replacing (this new) $B$ by an affine open subset, and shrinking $Y$ accordingly, we may assume that $Y= U_B\times_B (\G_{m,B}^s)$, where $U_B$ is 
an integral unipotent commutative algebraic group scheme over $B$, and 
\begin{equation}\label{eqinclude}
W\subseteq U_B(B)\times \G_{m,B}^s(B)\subseteq \Aut_\pi(Y)
\end{equation} 
 acts on $Y$ by translation;   here $U_B(B)$ and $\G_{m,B}^s(B)$ denote the ind-varieties of sections of the structure  morphisms 
 $U_B \to B$ and $\G_{m,B}^s \to B$ respectively.  
 
 \begin{rem}  
A section $\sigma \colon B \to U_B$ defines an automorphism of $U_B \simeq B\times_B U_B$  
by $\phi(\sigma\times_B\id_{U_B})$, where $\phi \colon U_B \times U_B \to U_B$ is the multiplication morphism of $U_B$; it defines in the same way an element of $\Aut_\pi(Y)$. Similarly $\G_{m,B}^s(B)$ embeds into $\Aut_\pi(Y)$, so $U_B(B)\times \G_{m,B}^s(B)\subseteq \Aut_\pi(Y)$, 
and this is the meaning of \eqref{eqinclude}. 
\end{rem}

\begin{rem} 
Both $U_B(B)\times \G_{m,B}^s(B)$ and $\Aut_\pi(Y)$ are ind-varieties over $\bfk$ and the inclusions 
in \eqref{eqinclude} are morphisms between ind-varieties. 
\end{rem}

Now, to prove Theorem~B, we only need to show that $W$ is contained in an 
algebraic subgroup of $U_B(B)\times \G_{m,B}^s(B)$.  

\subsubsection{Structure of $U_B$} 
Let $B$ be a normal affine variety over the algebraically closed field $\bfk$, and let $U_B$ be an integral, connected and unipotent algebraic group scheme over $B$ (we do not assume $U_B$ to be commutative here).


\begin{lem}\label{lemunialg}The ind-group $U_B(B)$ is an increasing
union of algebraic subgroups.
\end{lem}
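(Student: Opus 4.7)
The plan is to filter $U_B$ by closed subgroup schemes with $\G_{a,B}$ quotients, filter $\sO(B)$ by finite-dimensional $\bfk$-subspaces, and combine the two to produce an exhaustion of $U_B(B)$ by algebraic subgroups.

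First, I would invoke the structure theory of commutative unipotent algebraic groups over the algebraically closed field $\bfk$: any such group admits a composition series with successive quotients isomorphic to $\G_a$. Applying this to the geometric generic fibre of $U_B\to B$ and spreading the resulting closed subgroup schemes out over a nonempty affine open of $B$, we may shrink $B$ (which is allowed at this stage of the proof of Theorem~B) and assume the existence of a filtration
\begin{equation*}
0=U_B^{(0)}\subset U_B^{(1)}\subset \cdots \subset U_B^{(n)} = U_B
\end{equation*}
by closed $B$-subgroup schemes with $U_B^{(i)}/U_B^{(i-1)}\simeq \G_{a,B}$ for each $i$.

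Next, I would fix a finite $\bfk$-algebra generating set of $\sO(B)$ and let $V_d\subset \sO(B)$ be the finite-dimensional $\bfk$-subspace of elements of total degree at most $d$ in those generators; thus $\sO(B)=\bigcup_{d\geq 0}V_d$, and each $V_d$, being a finite-dimensional $\bfk$-vector space under addition, is an algebraic subgroup of $\G_{a,B}(B)=\sO(B)$. This settles the base case $n=1$ of an induction on the filtration length.

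For the inductive step, the affineness of $B$ together with $H^1(B,\sO_B)=0$ yields, by induction on the filtration of $U_B^{(i-1)}$, that $H^1(B,U_B^{(i-1)})=0$; hence the sequence
\begin{equation*}
0\longrightarrow U_B^{(i-1)}(B)\longrightarrow U_B^{(i)}(B)\longrightarrow \sO(B)\longrightarrow 0
\end{equation*}
is exact. Choose a $\bfk$-linear set-theoretic section $\sigma\colon \sO(B)\to U_B^{(i)}(B)$ and form the cocycle $c(v,w)=\sigma(v)\sigma(w)\sigma(v+w)^{-1}\in U_B^{(i-1)}(B)$. Since $U_B^{(i)}$ is commutative unipotent, the multiplication is polynomial of bounded degree in the coordinates, so $c$ sends $V_d\times V_d$ into a finite-dimensional $\bfk$-subspace of $U_B^{(i-1)}(B)$ which, by the inductive hypothesis, is contained in some algebraic subgroup $H_d$. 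The set $G_d:=H_d\cdot \sigma(V_d)$ is then a finite-dimensional algebraic subvariety of $U_B^{(i)}(B)$ closed under the group law, hence an algebraic subgroup; the family $\{G_d\}_{d\geq 0}$ exhausts $U_B^{(i)}(B)$.

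The main obstacle is to make precise the claim that iterated group-law operations stay inside a finite-dimensional subspace of $U_B^{(i-1)}(B)$; this is where the commutativity and the unipotent (polynomial, bounded-degree) group structure are used essentially. In characteristic zero the obstacle evaporates entirely: the exponential trivializes each extension, so that $U_B\simeq \G_{a,B}^n$ as a $B$-group scheme, $U_B(B)=\sO(B)^n$ additively, and the $V_d^n$ are already algebraic subgroups exhausting $U_B(B)$.
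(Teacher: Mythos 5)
There is a genuine gap at the crux of your inductive step, and it is exactly the point you yourself flag as ``the main obstacle'': nothing in your construction controls degrees. You take an abstract set-theoretic section $\sigma\colon \sO(B)\to U_B^{(i)}(B)$ of the map on groups of sections; such a $\sigma$ can send elements of $V_d$ to sections of $U_B^{(i)}\to B$ of unbounded degree (one may alter $\sigma(v)$ by an arbitrary element of $U_B^{(i-1)}(B)$ separately for each $v$), so the claim that $c(v,w)=\sigma(v)\sigma(w)\sigma(v+w)^{-1}$ maps $V_d\times V_d$ into a ``finite-dimensional'' piece of $U_B^{(i-1)}(B)$ is unjustified: the bounded-degree polynomial nature of the group law only helps once the inputs $\sigma(v)$ themselves have bounded degree. (Note also that in characteristic $p$ the groups $U_B^{(i)}(B)$ are not $\bfk$-vector spaces, so ``$\bfk$-linear section'' and ``finite-dimensional $\bfk$-subspace of $U_B^{(i-1)}(B)$'' are not meaningful; the correct notion is ``contained in a subset of bounded degree''.) In addition, your inductive hypothesis --- that $U_B^{(i-1)}(B)$ is merely a union of algebraic subgroups --- is too weak to guarantee that a given bounded-degree subset sits inside a single one of those subgroups, which is what you need to define $H_d$.

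The paper closes both holes simultaneously. First, after a finite base change of $B$ (the normalization of $B$ in a finite extension $L$ of $\bfk(B)$ --- this is also what your ``spreading out from the geometric generic fibre'' really requires, not just shrinking $B$; such base changes are legitimate because $U_B(B)$ embeds as an ind-group into $(U_B\times_{B}B_0)(B_0)$ for any dominant $B_0\to B$), it chooses a \emph{scheme-theoretic} cross-section $s_B\colon V_B\to U_B$ of the quotient $q_B$ over $B$, using the fact that an extension of unipotent groups admits a regular cross-section and that $V_L\simeq \bbA^\ell_L$ as a variety. The resulting cocycle $C_0$ is then a morphism of ind-varieties, so there is an increasing function $\alpha$ with $\deg C_0(S_1,S_2)\leq \alpha(d)$ for sections of degree at most $d$ (commutativity is used to reduce the twisted law to $(S_1+S_2,\,C_0(S_1,S_2)+T_1+T_2)$). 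Second, the induction is run in the quantitative form: there is an increasing $\gamma$ such that $\langle V_B(B)_d\rangle$ is an algebraic group contained in $V_B(B)_{\gamma(d)}$. With these two ingredients your argument that $H_d\cdot\sigma(V_d)$ is a bounded-degree subgroup does go through; without them the inductive step does not. (Your cohomological surjectivity argument via $H^1(B,\sO_B)=0$ and your characteristic-zero remark are fine, and parallel the paper's use of the section and its introductory remark on $\G_{a,B}^r$.)
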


In the language of \cite{FK18}, Lemma~\ref{lemunialg} says that $U_B(B)$  is a nested ind-group (see~Remark~\ref{rem:intro-nested}).
Before describing the proof, let us assume that $U_B$ is just an $r$-dimensional additive group $\G_{a, B}^r$. 
Then, each element of $U_B$ can be written 
\begin{equation}
f = (a_1^f(z),\dots,a_r^f(z))
\end{equation}
where each $a_i^f(z)$ is an element of $\sO(B)$; 
its $n$-th power 
is given by $f^n=(n a_1^f(z),\dots,\\ na_r^f(z))$. 
Thus, viewed as automorphisms of $Y$, the degrees of the $f^n$ are bounded independently of $n$, by 
(a function of) the degrees of the $a_i^f$. 
Our proof is a variation on this basic remark, with two extra difficulties: the structure of $U_B$ 
may be more subtle in positive characteristic (see~\cite{Serre:GACDC}, \S VII.2); instead of iterating one element $f$, we need to controle 
the group $U_B$ itself.

\begin{proof}
Denote by $\pi_U:U_B\to B$ the structure morphism. Recall, from the end of Section~\ref{par:Reduction_of_Y}, that $B$ is an affine
variety.

The proof is by induction on the relative dimension of $\pi_U\colon U_B\to B$. If this dimension is zero, 
there is nothing to prove. So, we assume that the lemma holds for  relative dimensions $\leq \ell$, for some $\ell \geq 0$, 
and we want to prove it when the relative dimension is $\ell+1$.
Denote by $U_{\eta}$ the generic fiber of  $\pi_U$. 
Our field $\bfk$ is algebraically closed, and the group $U_B$ is 
connected, so by Corollary~14.55 of \cite{Milne} (see also \S~14.63),  there exists a finite field extension $L$ of
$\bfk(B)$ such that  $U_{L}:=U_{\eta}\otimes_{\bfk(B)}L$
sits in a central exact sequence
\begin{equation}
0\to \G_{a,L}\to U_L
\xrightarrow{q_L} V_L\to 0,
\end{equation}
where $V_L$ is an irreducible unipotent group of dimension $\ell$ and 
$V_L$ is isomorphic to $\bbA^\ell_L$ as an $L$-variety; moreover,  there is an isomorphism 
of  $L$-varieties $\phi_L\colon U_L\to V_L\times \G_{a,L}$ such that the quotient 
morphism $q_L$ is given by the projection onto the first factor. So we have a section $s_L: V_L\to U_L$ such that 
$q_L\circ s_L=\id$. The section $s_L$ is just given by a regular function on $V_L$, it needs not be a homomorphism of groups.
Doing the base change given by the normalization of $B$ in $L$, and then shrinking the base if necessary, 
we may assume that $B$ is affine and
\begin{itemize}
\item there is an exact sequence of group schemes over  $B$,
$$0\to \G_{a,B}\rightarrow U_B\xrightarrow[]{q_B} V_B\to 0,$$
where $V_B$ is a unipotent group scheme over $B$ of relative dimension $\ell$; 
\item there is an isomorphism of $B$-schemes $V_B\simeq \bbA^\ell_B$;
\item $s_L$ extends 
to a section $s_B: V_B\to U_B$ over $B$: $q_B\circ s_B=\id$.
\end{itemize}
For $b\in B$, denote by $U_b$, $V_b$, $q_b$, $s_b$ the specialization of~$U_B$, $V_B$, $q_B$, $s_B$ at~$b$.
We denote by $\circ_U$ and $\circ_V$ the group laws on the groups $U_B$ and $V_B$ respectively.  
The morphism of $B$-schemes $\beta: U_B\to V_B\times \G_{a,B}$ 
sending a point $x$ in the fiber $U_b$ to the point $(q_b(x), x-s_b(q_b(x)))$ of the fiber $V_b\times \G_{a,b}$ defines an isomorphism. 
We use $\beta$ to transport the group law of $U_B$ into $V_B\times \G_{a,B}$; this defines a law $*$ on $V_B\times \G_{a,B}$, given by 
\begin{equation}
a_1*a_2=\beta(\beta^{-1}(a_1) \circ_U \beta^{-1}(a_2)),
\end{equation}
for $a_1$ and $a_2$ in $V_B\times \G_{a,B}$.   
Denote by $O(V_B\times_B V_B)$ the function ring of the $\bfk$-variety $V_B\times_B V_B\simeq B \times \bbA^{\ell}\times \bbA^{\ell}$.
We write a point in $V_B\times_B V_B$ as $(b,x_1,x_2)$ where $x_1,x_2\in V_B$ with the same image $b$ in $B$.
There is an element $F(b,x_1,x_2)(y_1,y_2)$ of $O(V_B\times_B V_B)[y_1,y_2]$ such that
\begin{equation}\label{eq:law_on_VG}
(x_1,y_1)*(x_2,y_2)=(x_1 \circ_V x_2,F(b,x_1,x_2)(y_1,y_2))
\end{equation}
for all $b\in B$ and $(x_1,y_1),(x_2,y_2)\in V_b\times \bbG_{a}$. 
For every fixed $(x_1,y_1,x_2)$,  the morphism $y_2\mapsto F(b,x_1,x_2)(y_1,y_2)$  is an automorphism of the variety $\G_{a}$. Thus, we can write 
\begin{equation}
F(b,x_1,x_2)(y_1,y_2)=C_0(b,x_1,x_2)(y_1)+C_2(b,x_1,x_2)(y_1)y_2.
\end{equation}
The function $C_2(b,x_1,x_2)(y_1)$ does not vanish on $V_B\times_B V_B\times \bbA^1\simeq B\times \bbA^{2\ell+1}$;
thus, $C_2$ is an element of $\sO(B)$.
By  symmetry we get
\begin{equation}
F(b,x_1,x_2)(y_1,y_2)=C_0(b,x_1,x_2)+C_1(b)y_1 +C_2(b)y_2
\end{equation}
and
\begin{equation}
(x_1,y_1)*(x_2,y_2)=(x_1 \circ_V x_2,C_0(b,x_1,x_2)+C_1(b)y_1 +C_2(b)y_2).
\end{equation}
Now, apply this equation for $x_1=x_2=0$ (the neutral element of $V_B$). The restriction of $\beta$ 
to the fiber $q_b^{-1}(0)$ is $x\mapsto x-s_b(0)$, so for $(0,y_1)$ and $(0, y_2)$ in $V_b\times \G_a$, we obtain $(0,y_1)*(0, y_2)=(0, y_1+y_2+s_b(0))$; then  $C_1=C_2=1$. 

We  identify now the ind-varieties $U_B(B)$  and $V_B(B)\times \bbG_a(B)$. By induction, the ind-group $V_B(B)$ is an
increasing union of algebraic subgroups $V_i$; as observed before the proof of this lemma, the ind-group $\bbG_a(B)$ is an increasing union of subgroups $G_j$. If $S$ and $T$ are elements of $V_B(B)$ and $\bbG_a(B)$ respectively, we set
\begin{equation}
\delta_V(S)=\min \{i\; ; \; S\in V_i\}, \quad \delta_{\bbG_a}(T)=\min \{j\; ; \; T\in G_j\}.
\end{equation}
Each element of $U_B(B)$
is given by a section $(S,T)\in V_B(B)\times \bbG_a(B)$ and the group law in $U_B(B)$ corresponds to the law 
\begin{equation}\label{eq:3.16}
(S_1,T_1)*(S_2,T_2)=(S_1 \circ_V S_2,C_0(S_1,S_2)+T_1 +T_2)
\end{equation}
because $C_1=C_2=1$. Here $C_0: V_B(B)\times V_B(B)\to \G_a(B)$ is a morphism of ind-varieties, so there is a
function $\alpha\colon \N \to \N$ such that  
\begin{equation}
\delta_{\bbG_a} C_0(S_1,S_2)\leq \alpha(\delta_V(S_1)+\delta_V(S_2)).
\end{equation}
Now, note that $V_i\times G_{\alpha(2i)}$ is an algebraic subgroup of $U_B(B)$, because 
\begin{eqnarray}
\quad\quad\quad  \delta_{\bbG_a}(C_0(S_1,S_2)+T_1 +T_2) & \leq &  \max\{\delta_{\bbG_a}(C_0(S_1,S_2)),\delta_{\bbG_a}(T_1),\delta_{\bbG_a}(T_2)\}.
\end{eqnarray}
Thus, $U_B(B)$ is the increasing union of the algebraic subgroups $V_i\times G_{\alpha(2i)}$.\end{proof}

\subsubsection{Subgroups of $\G_m^s(B)$ and conclusion}\label{par:conclusion-thmB}

\begin{lem}\label{lemgm}
If $Z$ is an irreducible subvariety of $\G_m^s(B)$ containing $\id$, then 
 $\langle Z\rangle$ is an algebraic subgroup of $\G_m^s(B)$.
\end{lem}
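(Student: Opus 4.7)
The plan is as follows. The ind-group $\G_m^s(B)$ is canonically identified with $(\sO(B)^*)^s$. By Rosenlicht's unit theorem, applied to the normal irreducible affine variety $B$, the abelian group $\sO(B)^*/\bfk^*$ is finitely generated; hence the quotient $Q:=\G_m^s(B)/(\bfk^*)^s\simeq (\sO(B)^*/\bfk^*)^s$ is a finitely generated, hence countable, abelian group.

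Next I would argue that the projection $p\colon \G_m^s(B)\to Q$ is locally constant along the degree filtration. On each algebraic piece $\G_m^s(B)_d$ the fibers of $p$ are the cosets of the closed subgroup $(\bfk^*)^s\cap \G_m^s(B)_d$, so $p$ takes only countably many values on a variety of finite type, and must therefore be constant on each irreducible component. In particular, $p(Z)$ is a single element of $Q$; since $\id\in Z$, that element is the neutral one, so $Z\subseteq (\bfk^*)^s$.

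Finally, $(\bfk^*)^s\subseteq \G_m^s(B)$ is an algebraic torus of bounded degree (its elements are the constant sections, which have fixed bounded degree in the ind-structure on $\G_m^s(B)$). Consequently $\langle Z\rangle\subseteq (\bfk^*)^s$ is automatically of bounded degree. By Proposition~\ref{ind-group-bded-degree}, or equivalently by the classical fact that inside an algebraic group the subgroup generated by an irreducible subvariety containing the identity is a connected algebraic subgroup, $\langle Z\rangle$ is an algebraic subgroup of $(\bfk^*)^s$, hence of $\G_m^s(B)$.

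The main technical point is the local constancy of $p$ on the ind-structure. It rests on Rosenlicht's theorem together with the remark that a connected algebraic family $t\mapsto u_t\in \sO(B)^*$ of units maps to a single class in the discrete finitely generated abelian group $\sO(B)^*/\bfk^*$; a convenient way to see this is to use Rosenlicht with parameters, which gives $\sO(T\times B)^*=\sO(T)^*\cdot \sO(B)^*$ for irreducible $T$ and $B$, and then to specialize back to~$B$.
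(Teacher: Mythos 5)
Your overall skeleton is the same as the paper's: show that $Z$ consists of constant sections, i.e.\ $Z\subseteq(\bfk^*)^s\subset\G_m^s(B)$, and then conclude inside the torus of constants. But your route to that first step is genuinely different, and it is only correct in the form you mention last. The paper compactifies $B$, bounds the divisors at infinity of the components $b_i^f$ by a fixed divisor $R$, and uses the \emph{finiteness} of the set $K$ of admissible divisor pairs to split the relevant bounded-degree locus into finitely many disjoint pieces $P_\beta$, only one of which ($P_0$, the constants) contains $\id$; irreducibility of $Z$ then applies over any algebraically closed field. Your replacement via Rosenlicht is legitimate and arguably cleaner when done with parameters: for an irreducible algebraic subvariety $Z\subseteq\G_m^s(B)$ containing $\id$, the evaluation maps give units $u_i\in\sO(Z\times B)^*$ (this uses that evaluation on a bounded-degree piece is a morphism, which is part of the ind-structure on the space of sections), Rosenlicht's product formula $\sO(Z\times B)^*=\sO(Z)^*\cdot\sO(B)^*$ writes $u_i(z,b)=c_i(z)d_i(b)$, and evaluating at $z=\id$ forces each $d_i$ to be constant; hence every element of $Z$ is a constant section, and $\langle Z\rangle$ is a connected algebraic subgroup of $(\bfk^*)^s$ by the classical fact (or by Proposition~\ref{ind-group-bded-degree}). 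This avoids the compactification and the divisor bookkeeping entirely.

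Your primary argument, however, has a genuine gap: ``$p$ takes only countably many values on a variety of finite type, and must therefore be constant on each irreducible component'' is false over a countable algebraically closed field. Over $\overline{\bfF}_p$ or $\overline{\Q}$ an irreducible variety \emph{is} a countable union of proper closed subsets (already $\bbA^1$ is the union of its points), so the countable partition of $\G_m^s(B)_d$ into closed cosets of $(\bfk^*)^s$ proves nothing; and Theorem~B, hence this lemma, is stated for an arbitrary algebraically closed field, so you cannot invoke uncountability here. To salvage that version you would need to show that on each bounded-degree piece the projection to $\G_m^s(B)/(\bfk^*)^s$ takes only \emph{finitely} many values -- which is exactly what the paper's finiteness of $K$ delivers. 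Since your closing remark supplies the parametrized Rosenlicht argument, your proof can be completed along those lines, but that argument should be promoted from an afterthought to the actual proof, and the countability step dropped.
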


This lemma may be derived from~\cite[Proposition 4.4.1.]{FK18}; it means that $(\G_m^s(B))^\circ$ is nested.  We provide the  proof for completeness.  	

\begin{proof}[Proof of Lemma \ref{lemgm}]
Pick a projective compactification ${\overline{B}}$ of $B$. After taking the normalization of ${\overline{B}}$, 
we may assume  ${\overline{B}}$ to be normal. 
If $h$ is any non-constant rational function on $\overline{B}$,  denote by ${\rm Div}(h)$ the divisor $(h)_0-(h)_\infty$  on ${\overline{B}}$. 

Let  $\bfy=(y_1, \ldots, y_{s})$ be the standard coordinates on $\G_m^{s}$. 
Each element $f\in \G_m^s(B)$ can be written as $(b_1^f(z),\dots,b_{s}^f(z))$, for some
$b_j^f\in \sO^*(B)$. 
Let $R$ be an effective divisor
whose support $\supp(R)$ 
contains ${\overline{B}}\setminus B$. Replacing $R$ by some large multiple,  $Z$ is contained 
in the subset $P_R$ of $\G_m^s(B)$ made of automorphisms $f \in \G_m^s(B)$ such that
${\rm Div}(b_i^f)+R\geq 0$ and ${\rm Div}(1/b_i^f)+R\geq 0$ for all $i=1,\dots,s$.
Let us study the structure of this set $P_R\subset \G_m^s(B)$.

Let $K$ be the set  of pairs $(D_1,D_2)$ of effective divisors supported on ${\overline{B}}\setminus B$ such that $D_1$ and $D_2$ have no common irreducible component, $D_1\leq R$, $D_2\leq R$, and $D_1$ and $D_2$ are rationally equivalent.
Then $K$ is a finite set. For every pair $\alpha=(D^{\alpha}_1,D^{\alpha}_2)\in K$, we choose a function $h_{\alpha}\in \sO^*(Y)$ such that ${\rm Div}(h_{\alpha})=D^{\alpha}_1-D^{\alpha}_2$;  if $h$ is another element of  $\sO^*(Y)$ such that ${\rm Div}(h)=D^{\alpha}_1-D^{\alpha}_2$, then $h/h_{\alpha}\in \bfk^*$.
By convention $\alpha=0$ means that $\alpha=(0,0)$, and in that case we choose $h_\alpha$ to be the constant function $1$. 	
For every $\beta=(\alpha_1,\dots,\alpha_{s})\in K^{s}$, denote by $P_{\beta}$ the set of elements $f\in \G_m^s(B)$ such that
the $b_i^f\in \sO^*(B)$ satisfy ${\rm Div}(b_i^f)=D^{\alpha_i}_1-D^{\alpha_i}_2$ for all $i=1,\ldots, s$.
Then $P_{\beta}\simeq \G_m^{s}(\bfk)$ is an irreducible algebraic variety over $\bfk$. Moreover, $\id\in P_{\beta}$ if and only if $\beta=0$, and $P_{0}$ is an algebraic subgroup of $\G_m^s(B)$, isomorphic to $\G_m^{s}(\bfk)$ as an algebraic group. 
	
Observe that $P_R$ is the disjoint union 
$P_R=\bigsqcup_{\beta\in K^{s}}P_{\beta}$. Since $\id\in Z$, $Z$ is irreducible, and $Z\subseteq P_R$, we obtain $Z\subset  P_0$.  
Since $P_0$ is an algebraic subgroup of $\G_m^s(B)$, $\langle Z\rangle$ coincides with
$(Z\cdot Z^{-1})^\ell$ for some $\ell \geq 1$, and $\langle Z\rangle$ is a connected algebraic group.
\end{proof}
	
\begin{proof}[Proof of Theorem~B]	
By Proposition~\ref{ind-group-bded-degree}, we only need to prove that $W=\langle V\rangle$ is of bounded degree. 
By Lemma \ref{lem:basechange}
 $W$ is a subgroup of bounded degree if and only if $W \subset \Aut_{\pi}(Y)$ is a subgroup of bounded degree. Moreover,
 by \eqref{eqinclude}, $W$ is a subgroup of $U_B(B)\times \G_{m}^s(B) \subset \Aut_{\pi}(Y)$.
	Denote by $\pi_1: U_B(B)\times \G_{m}^s(B)\to U_B(B)$ the projection to the first factor and 
	$\pi_2: U_B(B)\times \G_{m}^s(B)\to \G_{m}^s(B)$ the projection to the second. By Lemma \ref{lemunialg}, there exists an algebraic  subgroup $H_1$ of $U_B(B)$ containing $\pi_1(W)$.
	Since $\pi_2(W)$ is irreducible and contains $\id$,  Lemma \ref{lemgm} shows that  $\pi_2(W)$ is contained in  an algebraic  subgroup $H_2$ of $\G_m(B)$. Then $W$ is contained in the algebraic subgroup $H_1\times H_2$ of $U_B(B)\times \G_{m}^s(B)$. This concludes the proof.
\end{proof}	
	
\section{Actions of additive groups}
	
\begin{thm}\label{corconnuncountablecommutator} Let $\bfk$ be an uncountable, algebraically closed field. 
Let $X$ be a connected affine variety over $\bfk$. 
Let $G\subset \Aut(X)$ be an algebraic subgroup isomorphic to $\G_a^r$, for some $r\geq 1$. 	
Let $H=\{h\in \Aut(X)|\,\, gh=hg \text{ for every } g\in G\}$ be the centralizer of $G$. 
If $H/G$ is at most countable then $G$ acts simply transitively on $X$, so that $X$ is isomorphic to $G$ as a $G$-variety.
\end{thm}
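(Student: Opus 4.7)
The plan is to show that $G$ acts simply transitively on $X$. Since $G$ is commutative and faithful, it suffices to show that the action is transitive: if $G\cdot x_0 = X$ for some $x_0$, then the stabilizer $G_{x_0}$ commutes with $G$ and hence fixes the entire orbit $G\cdot x_0 = X$ pointwise, so $G_{x_0} = \{e\}$ by faithfulness. I argue by contradiction, assuming the action is not transitive and producing uncountably many distinct cosets in $H/G$.

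The key construction associates to each $G$-invariant morphism $\phi\colon X \to G$ the automorphism $\sigma_\phi(x) := \phi(x)\cdot x$. One verifies $\sigma_\phi^{-1} = \sigma_{-\phi}$, $\sigma_\phi\sigma_\psi = \sigma_{\phi+\psi}$, and (using $G$-invariance of $\phi$ together with commutativity of $G$) $\sigma_\phi \in H$, since $\sigma_\phi(gx) = \phi(gx)\cdot gx = \phi(x)\cdot g\cdot x = g\cdot\sigma_\phi(x)$. Suppose $\bfk[X]^G \ne \bfk$ and fix a non-constant $f \in \bfk[X]^G$. Faithfulness forces the generic stabilizer $G_\eta$ to be a proper subgroup of $G$: otherwise $G$ would fix a dense open, hence all of $X$. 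Choose $v \in G\setminus G_\eta$ and set $\phi_t := tfv$ for $t\in\bfk$. Then $\sigma_{\phi_t}\sigma_{\phi_{t'}}^{-1} = \sigma_{(t-t')fv}$ lies in $G$ exactly when the composition $X \to G \to G/G_\eta$, $x\mapsto(t-t')f(x)\bar v$, is constant, where $\bar v\ne 0$ denotes the image of $v$ in $G/G_\eta$; since $f$ is non-constant this forces $t = t'$. Hence $\{\sigma_{\phi_t}\}_{t\in\bfk}$ gives uncountably many distinct cosets in $H/G$, contradicting the hypothesis.

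It remains to handle the case $\bfk[X]^G = \bfk$. For any non-zero $G$-stable ideal $J\subseteq\bfk[X]$, pick a non-zero $h\in J$; the rational $G$-action on $\bfk[X]$ makes $W := \langle G\cdot h\rangle_{\bfk}\subseteq J$ a non-zero finite-dimensional $G$-submodule, and unipotence of $G = \G_a^r$ furnishes a non-zero $G$-fixed vector $s\in W^G\subseteq J\cap\bfk[X]^G = J\cap\bfk$. Hence $J$ contains a non-zero scalar and $J = \bfk[X]$, so the only $G$-stable closed subsets of $X$ are $\emptyset$ and $X$. In particular $X$ is irreducible, as each irreducible component is $G$-stable ($G$ being connected) and cannot be a proper non-empty subset. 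By the classical theorem that every orbit of a unipotent group action on an affine variety is closed, each $G$-orbit is a closed $G$-stable subset and therefore equals $X$; thus $X$ is a single $G$-orbit and the reduction of the first paragraph delivers simple transitivity.

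The most delicate step is the modulo-$G$ analysis in the first case, which requires projecting to the quotient $G/G_\eta$ to check that distinct parameters $t\in\bfk$ yield distinct cosets in $H/G$. The second case is essentially formal once two standard facts are granted: the unipotent fixed-vector lemma for rational representations, and closedness of orbits of unipotent groups on affine varieties (the latter being itself a consequence of the former, applied to the ideal of the boundary of an orbit closure). Uncountability of $\bfk$ enters only in the first case, through the continuum parameter $t \in \bfk$.
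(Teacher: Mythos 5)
Your overall architecture (reduce to transitivity; split into the cases $\bfk[X]^G=\bfk$ and $\bfk[X]^G\neq\bfk$; use the maps $\sigma_\phi(x)=\phi(x)\cdot x$ for $G$-invariant $\phi\colon X\to G$) is the same as the paper's, and the first and third paragraphs are essentially correct: the reduction to transitivity is fine, and the case $\bfk[X]^G=\bfk$ is exactly the paper's Lemma~\ref{lemdenseobit}. The gap is in the crucial case $\bfk[X]^G\neq\bfk$, in the claimed equivalence ``$\sigma_{(t-t')fv}\in G$ exactly when $x\mapsto (t-t')f(x)\bar v$ is constant in $G/G_\eta$.'' The correct criterion for $\sigma_\psi\in G$ is the existence of a single $g_0\in G(\bfk)$ with $\psi(x)\in g_0+G_x$ for \emph{every} $x$, i.e.\ nonemptiness of $\bigcap_x(\psi(x)+G_x)$; since the stabilizers $G_x$ vary with $x$ (there is in general no subgroup $G_\eta\subset G$ over $\bfk$ equal to $G_x$ for generic $x$, and even the scheme-theoretic generic stabilizer only gives a condition ``the image of $\psi$ in $(G/G_\eta)(\bfk(X))$ lies in the image of $G(\bfk)$,'' which is much weaker than constancy), your deduction ``this forces $t=t'$'' does not follow. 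Concretely: let $X=\bbA^2$, $G=\G_a^2$ acting by $(a,b)\cdot(u,w)=(u,w+a+ub)$ (faithful, with $G_{(u,w)}=\{(a,b):a+ub=0\}$ varying with $u$), take $f=u\in\bfk[X]^G$ and $v=(1,0)\notin G_\eta$. Then $\sigma_{tfv}(u,w)=(u,w+tu)$, which is the element $(0,t)$ of $G$ for every $t$: all your automorphisms $\sigma_{\phi_t}$ lie in the single coset $G$, even though $f$ is non-constant and $\bar v\neq 0$. So with this (legitimate, by your criteria) choice of $v$ the construction produces no new cosets at all, and the contradiction with countability of $H/G$ is not obtained.

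The statement you are after in that case is true (it is the content of the paper's Lemma~\ref{lemuncountablecommutator}), but proving it is precisely the delicate point: one must show that if some non-constant invariant exists, then one can manufacture an invariant morphism $\psi\colon X\to G$ for which $\bigcap_x(\psi(x)+G_x)=\emptyset$, and the paper does this by an induction on $\dim W(\xi)$ with $W(\xi)=\bigcap_{x\in X_1}(\xi(x)+G_x)$, using a carefully constructed invariant function $\gamma$ vanishing at finitely many chosen points and equal to $1$ at an auxiliary point $y$ with $G_y\neq G$, together with the uncountability of $\bfk$ to show $\Phi^{-1}(G)$ is all of $(I_1^G)^r$. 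Your proposal has no substitute for this step (and, relatedly, treats ``the generic stabilizer'' as a fixed subgroup of $G$, which also breaks down for connected but reducible $X$, where the paper works componentwise with the ideals $I_i$). As written, the second paragraph therefore does not prove the theorem.
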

	
This section is devoted to the proof of this result. A proof is described in \cite[\S 10.4]{FK18}
when $X$ is irreducible and the characteristic of $\bfk$ is $0$; we just explain how to extend the proof of Furter and Kraft. 

\begin{proof}
Let $X_1$ be an irreducible component of $X$ on which $G$ acts non-trivially. Denote by $X_j$, $j\geq 2$ the remaining components.

Suppose that $G$ acts transitively on $X_1$. Then $X=X_1$, because otherwise $X_1$ would intersect another component of $X$ on a proper $G$-invariant set; so, the statement is proved in that case. 
We  now assume   towards a contradiction        that $G$ does not act transitively on $X_1$. Pick a $G$-orbit $O_1\subset X_1$, and set $Z_1=\overline{O_1}$ or $Z_1=\overline{X_1\setminus O_1}$ if $\overline{O_1}=X_1$. By construction, $Z_1$ is a proper, closed, and $G$-invariant subset of $X_1$. Hence, the ideal $I_1\subset O(X)$ of functions vanishing on $Z_1$ and  on each of the $X_i$ for $i\neq 1$ is not reduced to $0$.
If we choose a function $f_1$ in $I_1\setminus\{0\}$, its $G$-orbit generates a $G$-invariant, finite dimensional 
subspace of $I_1$ (see~\cite[\S 1.2]{Popov-Vinberg}); since $G$ is isomorphic to $\G_a^r$, there is a non-zero invariant 
vector $f$ in this space (this is an instance of the Lie-Kolchin theorem). Such a function is 
not constant since it vanishes on $Z_1$. This implies that $I_1^G$ is an
infinite dimensional vector space over $\bfk$, for it contains $f\bfk[f]$.

Identify $G(\bfk)$ to the vector space $\bfk^r$, and pick an element $g_0\in G(\bfk)$
that acts non-trivially on $X_1$. To each $s$ in $I_1^G$ we associate
the map  $x\in X(\bfk) \mapsto s(x)g_0\in  \bfk^r$ and the automorphism of $X$ defined by 
$F_s(x) = (s(x)g_0)(x)$. 
If $F_s=\id_X$, then $g_0$ is an element of the stabilizer $G_x$ for every $x$ at which $s(x)\neq 0$. 
If $s$ vanishes on a proper subset of $X_1$, this implies that $g_0$ acts trivially on $X_1$, a contradiction. 
So, $F_s\neq \id_X$ for $s\neq 0$. This means that 
$F\colon s\in I_1^G\mapsto F_s\in H$ is injective. 

Now, by Proposition 10.4.4(1) of \cite{FK18}, the homomorphism $F$ is a morphism of 
ind-groups. Thus, $H$ contains an infinite dimensional ind-group. Since $\bfk$ is not
countable, we get a contradiction and we are done.
\end{proof}

%
%
\section{Proof of Theorem~A}\label{CharacterizationofAn}
%
%
	
In this section, we prove Theorem~A. So, $\bfk$ is an uncountable, algebraically closed field, 
$X$ is a connected affine algebraic variety over~$\bfk$, 
and $\varphi: \Aut(\bbA^n_\bfk) \to \Aut(X)$ is an isomorphism of (abstract) groups.  
	
\subsection{Translations and dilatations}	
Let   $\Tr \subset \Aut(\bbA^n_\bfk)$ be the group of all translations and 
$\Tr_i$ the subgroup of translations of the $i$-th coordinate:
\begin{equation}
(x_1, \ldots, x_n) \mapsto (x_1,\dots,x_i + c,\dots,x_n)
\end{equation}
for some $c$ in $\bfk$.  Let
$\T\subset \GL_n(\bfk)\subset \Aut(\bbA^n_\bfk)$ be the diagonal group (viewed as a maximal torus) and let
$\T_i$ be the subgroup of automorphisms  
\begin{equation}
(x_1, \ldots, x_n)\mapsto (x_1,\dots,ax_i,\dots,x_n)
\end{equation}
for some $a\in \bfk^*$. A direct computation shows that {\sl{$\Tr$ (resp. $\T$) coincides with its centralizer in 
$\Aut(\bbA^n_\bfk)$}}.

\begin{lem}\label{lemcountablesub} Let $G$ be a subgroup of  $\Tr$ whose index is at most countable. 
Then, the centralizer of $G$ in $\Aut(\bbA^n)$ is $\Tr$.
\end{lem}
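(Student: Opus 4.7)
The plan is to show that $G$ is Zariski-dense in $\Tr$ and that the centralizer of any element $f\in\Aut(\bbA^n_\bfk)$, intersected with $\Tr$, is Zariski-closed. Combining these with the fact that $\Tr$ is its own centralizer gives the result.

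First I would establish Zariski-density. Identify $\Tr$ with the affine group $\bbA^n_\bfk$ (as an algebraic group). Let $\overline{G}$ denote the Zariski closure of $G$ in $\Tr$; this is a closed algebraic subgroup, hence a linear subspace of $\bbA^n_\bfk$. If $\overline{G}$ were a proper subspace, then $\Tr/\overline{G}$ would be a nontrivial $\bfk$-vector space, hence of cardinality $|\bfk|$, which is uncountable by hypothesis on $\bfk$. Since $G\subseteq \overline{G}\subsetneq \Tr$, the index $[\Tr:G]$ would then be uncountable, contradicting the assumption. So $\overline{G}=\Tr$.

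Next I would handle the centralizer. Let $f\in \Aut(\bbA^n_\bfk)$ commute with every element of $G$. The map $c_f \colon \Tr \to \Aut(\bbA^n_\bfk)$ defined by $c_f(t) = f t f^{-1}$ is the restriction to $\Tr$ of conjugation by $f$, which is a morphism of ind-groups (inversion and composition are morphisms of ind-varieties by Theorem~\ref{thm:ind-group}). Thus $c_f$ maps $\Tr$ into some $\Aut(\bbA^n_\bfk)_d$ algebraically, and so does the inclusion $\iota \colon \Tr \hookrightarrow \Aut(\bbA^n_\bfk)$. The equalizer
\[
Z_f:=\{t\in \Tr \mid c_f(t)=\iota(t)\}=\{t\in\Tr\mid ftf^{-1}=t\}
\]
is therefore a Zariski-closed subset of $\Tr$. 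By hypothesis $G\subseteq Z_f$, and $G$ is Zariski-dense in $\Tr$, so $Z_f=\Tr$. Hence $f$ commutes with every element of $\Tr$, and the self-centralizing property of $\Tr$ recalled just before the lemma forces $f\in \Tr$.

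The argument is short and the only delicate point is the second step, namely verifying that conjugation by a fixed element is a morphism of ind-varieties so that $Z_f$ is closed; once this is granted, Zariski-density of $G$ immediately upgrades $f$'s commutation with $G$ to commutation with all of $\Tr$. The first step (Zariski-density) is the substantive input and relies essentially on the uncountability of $\bfk$, which is exactly the standing hypothesis of the section.
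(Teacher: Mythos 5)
Your proof is correct and takes essentially the same route as the paper: Zariski-density of $G$ in $\Tr$ from the countable-index hypothesis and the uncountability of $\bfk$, closedness of the commutation condition (via the ind-group structure of $\Aut(\bbA^n_\bfk)$), and then the self-centralizing property of $\Tr$; the paper simply states these steps more tersely. One small correction: in positive characteristic the Zariski closure $\overline{G}$ need not be a \emph{linear subspace} of $\bbA^n_\bfk$ (e.g.\ the graph of Frobenius in $\G_a^2$), but it is still a proper closed subgroup, hence of dimension $<n$, and a variety of dimension $n$ over an uncountable field cannot be covered by countably many translates of a lower-dimensional closed subvariety, so your density argument goes through unchanged.
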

	
\begin{proof} 
The centralizer of $G$ contains $\Tr$. Let us prove the reverse inclusion.
The index of $G$ in $\Tr$ being at most countable, $G$ is Zariski dense in $\Tr$.
Thus, if $h$ centralizes $G$, we get $hg=gh$ for all $g\in \Tr$, and $h$ is in fact in the centralizer of $\Tr$.
 Since $\Tr$ coincides with its centralizer, we get $h\in\Tr$.  
\end{proof}
	
\subsection{Closed subgroups}	
As in Section~\ref{ind-groups}, we endow $\Aut(X)$ with the structure of an ind-group, given by a filtration by algebraic varieties $\Aut_j$ for $j \ge 1$. 
	
\begin{lem}\label{closed}
The  groups $\varphi(\Tr)$, $\varphi(\Tr_i)$, $\varphi(\T)$ and $\varphi(\T_i)$ are closed subgroups of
$\Aut(X)$ for all $i = 1,\dots,n$.
\end{lem}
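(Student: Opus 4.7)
The plan is to write each of the four subgroups as an intersection of centralizers, and then use the general fact that in an ind-group the centralizer of any subset is Zariski closed. More precisely, for any fixed $g \in \Aut(X)$, the commutator map $h \mapsto h g h^{-1} g^{-1}$ is a morphism of ind-varieties (a composition of the multiplication and inversion morphisms of the ind-group $\Aut(X)$), so its fibre over $\id$, namely $\{ h : hg = gh \}$, is closed. Intersecting over any subset $S \subseteq \Aut(X)$, the centralizer $C_{\Aut(X)}(S)$ is closed in $\Aut(X)$.

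With this in hand, $\varphi(\Tr)$ and $\varphi(\T)$ are immediate: the excerpt records just above the statement that each of $\Tr$ and $\T$ coincides with its own centralizer in $\Aut(\bbA^n_\bfk)$, and since $\varphi$ is an isomorphism of abstract groups this self-centralizing property transports, giving $\varphi(\Tr) = C_{\Aut(X)}(\varphi(\Tr))$ and similarly for $\T$; both are therefore closed.

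For $\varphi(\Tr_i)$ and $\varphi(\T_i)$, I would verify the group-theoretic identities
\begin{equation*}
\Tr_i = \Tr \cap \bigcap_{j \neq i} C_{\Aut(\bbA^n_\bfk)}(\T_j), \qquad \T_i = \T \cap \bigcap_{j \neq i} C_{\Aut(\bbA^n_\bfk)}(\Tr_j)
\end{equation*}
inside $\Aut(\bbA^n_\bfk)$. These are easy direct computations: a translation by $c$ commutes with $\T_j$ iff $c_j = 0$, and a diagonal automorphism $(a_1 x_1, \dots, a_n x_n)$ commutes with $\Tr_j$ iff $a_j = 1$. Applying $\varphi$ expresses each of $\varphi(\Tr_i)$ and $\varphi(\T_i)$ as an intersection of the already-closed set $\varphi(\Tr)$ or $\varphi(\T)$ with finitely many centralizers of the form $C_{\Aut(X)}(\varphi(\T_j))$ or $C_{\Aut(X)}(\varphi(\Tr_j))$ in $\Aut(X)$, all of which are closed by the first paragraph, so their intersection is closed as well.

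There is no serious obstacle here; the argument uses nothing about $X$ beyond the ind-group structure on $\Aut(X)$. The only point to be attentive to is that the defining identities above are purely group-theoretic in $\Aut(\bbA^n_\bfk)$, so that they are preserved by the abstract isomorphism $\varphi$.
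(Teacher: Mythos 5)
Your proposal is correct and follows essentially the same route as the paper: $\varphi(\Tr)$ and $\varphi(\T)$ are closed because they coincide with their own centralizers (a property preserved by the abstract isomorphism $\varphi$), and $\varphi(\Tr_i)$, $\varphi(\T_i)$ are then cut out inside them by the same commutation conditions with $\varphi(\T_j)$, resp.\ $\varphi(\Tr_j)$, for $j\neq i$. The only difference is that you spell out the (correct) general fact that centralizers are closed in an ind-group via the commutator morphism, which the paper leaves implicit.
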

\begin{proof}
Since $\Tr \subset \Aut(\bbA^n_\bfk)$ coincides with its centralizer, 
$\varphi(\Tr) \subset \Aut(X)$ coincides with its centralizer too and, as such, is a closed subgroup of $\Aut(X)$. 
The same argument applies to $\varphi(\T) \subset \Aut(X)$.
To prove that $\varphi(\Tr_i) \subset \Aut(X)$ is closed we note that $\varphi(\Tr_i)$ is the subset of elements $f \in \varphi(\Tr)$ 
		that commute  to every element $g\in \varphi(\T_j)$ for every index $
		j\neq i$  in $\{1, \ldots, n\}$. 
		Analogously,  $\varphi(\T_i) \subset \Aut(X)$ is a closed subgroup because an element $f$ of $\T$ is in $\T_i$ if and only if it commutes to all elements $g$ of $\Tr_j$ for $j\neq i$.\end{proof}
	
	\newpage
\subsection{Proof of Theorem~A}	
	
\subsubsection{Abelian groups (see~\cite{Milne, Serre:GACDC})}\label{par:uni-p}
Before starting the proof, let us recall a few important facts on abelian, affine algebraic groups. Let $G$ be an algebraic group over 
the field $\bfk$, such that $G$ is abelian, affine, and connected.
\begin{enumerate}
\item If $\char (\bfk)=0$, then $G$ is isomorphic to $\G_a^r\times \G_m^s$ for some pair of integers $(r,s)$; if $G$ is 
unipotent, then $s=0$. (see~\cite{Serre:GACDC}, \S VII.2, p.172)
\item  If $\char (\bfk)>0$, then $G$ is a product of a multiplicative type subgroup $G_s$ and a unipotent subgroup $G_u$ (see \cite{Milne}, Theorem 17.17).
Moreover, since  $\bfk$ is algebraically closed,  $G_s$ is isomorphic to an algebraic torus $\G_m^s$ for some $s \ge 0$.
\end{enumerate}
We list two 
criteria on the $p$-torsion elements of a commutative connected algebraic group $G$ that may rigidify the structure of $G_s$ and $G_u$: 
\begin{enumerate}
\item[(3)] If $\char (\bfk)=p>0$, $G$ is unipotent, and all
 non-trivial 
elements of $G$ have order $p$, then $G$ is isomorphic to $\G_a^r$ 
for some $r\geq 0$.  (see~\cite{Serre:GACDC}, \S VII.2, Prop. 11, p.178)
\item[(4)]  If $\char (\bfk)=p>0$, and there is no non-trivial element in $G$ of order $p^\ell$, for any $\ell \geq 0$, then $G$ is isomorphic to $G_s=\G_m^s$ 
for some  $s\geq 0$. (see~\cite{Milne}, Theorem 16.13 and Corollary 16.15, and~\cite{Serre:GACDC}, \S VII.2, p.176)
\end{enumerate}
To keep examples in mind, note that all non-trivial elements of $\Tr_1(\bfk)$ have order $p$ and $\T_1(\bfk)$ does not contain any non-trivial 
element of order $p^\ell$  when $\char(\bfk)=p$.
	
\subsubsection{Proof of Theorem~A} Let us now prove Theorem~A. 

By Lemma \ref{closed}, $\varphi(\Tr_1) \subset \Aut(X)$ is a closed subgroup; in particular, $\varphi(\Tr_1)$ is an ind-subgroup of $\Aut(X)$. Let $\varphi(\Tr_1)^\circ$ be the connected component of the identity of  $\varphi(\Tr_1)$; from Section~\ref{par:ind_groups_connected}, we know that the index of $\varphi(\Tr_1)^\circ$ in  $\varphi(\Tr_1)$ is at most countable. The ind-group $\varphi(\Tr_1)^\circ$  is an increasing union $\cup_i V_i$ of irreducible algebraic varieties $V_i$,  each $V_i$ containing the identity. 
Theorem~B implies that each $\langle V_i \rangle$ is an irreducible algebraic subgroup of $\Aut(X)$.  
Since  $\varphi(\Tr_1)$ does not contain non-trivial elements of order $k<\infty$ with 
$k\wedge \char(\bfk)=1$, it follows from  properties (1) and (2) of Section~\ref{par:uni-p} that $\langle V_i \rangle$ is unipotent;
moreover, by properties (1) and (3) of  Section~\ref{par:uni-p}, $\langle V_i \rangle$ is isomorphic to 
$\G_a^{r_i}$ for some~$r_i$. 
Thus 
\begin{equation}
\varphi(\Tr_1)^\circ=\cup_{i\geq 0}F_i
\end{equation} 
where the $F_i$ form an increasing family of unipotent algebraic subgroups of $\Aut(X)$, each of them isomorphic to some $\G_a^{r_i}$. 
We may assume that $\dim F_0\geq 1$.

Similarly, $\varphi(\T_1)^\circ \subset \varphi(\T_1)$ is a subgroup of countable index  and 
\begin{equation}
\varphi(\T_1)^\circ = \cup_{i\geq} G_i,
\end{equation}
where  the
$G_i$ are increasing irreducible commutative algebraic subgroups of $\Aut(X)$ (we do not assert that $G_i$ is of type $\G_m^{s_i}$ yet). 
We may assume that $\dim G_0\geq 1.$

The group $\T_i$ acts by conjugation on $\Tr_i$ for every $i\leq n$, this action has exactly two orbits $\{ 0 \}$ and $\Tr_i\setminus \{ 0 \}$, and the
action on $\Tr_i\setminus \{ 0 \}$ is free; hence, the same properties
hold for the action of  $\varphi(\T_i)$ on $\varphi(\Tr_i)$ by conjugation. 

Let $H_i$ be the subgroup of $\varphi(\Tr_1)$ generated by all $g\circ f\circ g^{-1}$ with $f$ in $F_i$ and $g$ in $G_i$. 
Theorem~B shows that $H_i$ is an irreducible algebraic subgroup of $\varphi(\Tr_1)$. We have $H_i\subseteq H_{i+1}$ and $g\circ H_i\circ g^{-1}=H_i$
for every $g\in G_i$. 	
	
Write $H_i=\G_a^l$ for some $l\geq 1$. We claim that $G_i\simeq \G_a^r\times \G_m^s$ for a pair of integers $r$, $s\geq 0$ with $r+s\geq 1$. 
This follows from properties~(1), ~(2) and~(3)  of Section~\ref{par:uni-p} because, when $\char(\bfk)=p>1$, the only element in $\varphi(\T_1)$ of order $p^\ell$, $\ell\geq 0$, is the identity element.
Since the action of $\varphi(\T_1)$ on $\varphi(\Tr_1 \setminus \{ 0 \})$ is free, the action of $G_i$ on $F_i\setminus \{0\}$ is free too,
 and thus, we get an action of $\G_a^r$ by automorphisms of the algebraic group $\G_a^l$ without fixed point in $\G_a^l\setminus\{0\}$, and this forces $r=0$ (an instance of the Lie-Kolchin theorem).  
Let $q$ be a prime number with $q\wedge \char(\bfk)=1$. Then $\G_m^s$ contains a copy of $(\Z/q\Z)^s$, and $\T_1$ does not contain 
such a subgroup if $s>1$; so, $s=1$, $G_i\simeq \G_m$ and $G_i=G_{i+1}$ for all $i\geq 0$. It follows that $\varphi(\T_1)^{\circ}\simeq \G_m$.
Since the index of $\varphi(\T_1)^\circ$ in $\varphi(\T_1)$ is  countable, there exists a countable subset $I\subseteq \varphi(\T_1)$
such that $\varphi(\T_1)=\sqcup_{h\in I} \varphi(\T_1)^{\circ}\circ h$.

Let $f\in F_i$ be a nontrivial element.  
Since the action of $\varphi(\T_1)$ on $\varphi(\Tr_1 \setminus \{ 0 \})$ is transitive,  
\begin{equation}
F_i\setminus \{0\}=\bigcup_{h\in I}   \left( \left(\bigcup_{g\in \varphi(\T_1)^{\circ}} (g\circ h)\circ f \circ (g\circ h)^{-1} \right)     \cap F_i \right) .
\end{equation}
The right hand side is a countable union of subvarieties of $F_i\setminus \{0\}$ of dimension at most one. It follows that $\dim F_i=1$, $F_i\simeq \G_a$,  and $\varphi(\Tr_1)^{\circ}\simeq \G_a$. Thus, we have
\begin{equation}
\varphi(\Tr_1)^{\circ}\simeq \G_a, \; {\text{and}}\; \varphi(\T_1)^{\circ}\simeq \G_m.
\end{equation}

Since each $\varphi(\Tr_i)^{\circ}$ is isomorphic to $\G_a$, $\varphi(\Tr)^{\circ}$ is an $n$-dimensional commutative unipotent group and its index in  $\varphi(\Tr)$ is   at most countable. 
By Lemma~\ref{lemcountablesub}, the centralizer of $\varphi^{-1}(\varphi(\Tr)^{\circ})$ in $\Aut(\bbA^n_\bfk)$ is $\Tr$. It follows that the centralizer of $\varphi(\Tr)^{\circ}$ in $\Aut(X)$ is $\varphi(\Tr)$.
Then Theorem~\ref{corconnuncountablecommutator} implies that $X$ is isomorphic to $\bbA^n_\bfk$.

%
%
\section{Appendix: the degree functions for rational self-maps}\label{par:appendix}
%
%

Here, we follow~~\cite{Dang-degrees-2020, Truong} to prove a general version of Lemma~\ref{lem:basechange}. As above, 
$\bfk$ is an algebraically closed field. We first start with the case of projective varieties.

\subsection{Degree functions on projective varieties}
 
Let $X$ be a projective and normal variety over $\bfk$ of pure dimension $d=\dim(X)$. 
Let $H$ be a big and nef divisor on $X$. For every dominant 
rational self-map $f$ of $X$,
and every $j=0,\dots,d$,    set
\begin{equation}
\deg_{j,H}f =(f^*(H^j)\cdot H^{d-j}).
\end{equation}
Pick a normal resolution of $f$; by this we mean  a projective and normal variety $\Gamma$,
a birational morphism $\pi_1:\Gamma\to X$ and a morphism $\pi_2: \Gamma\to X$  satisfying $f=\pi_2\circ\pi_1^{-1}.$
Then we have $\deg_{j,H}f=(\pi_2^*(H^j)\cdot \pi_1^*(H^{d-j}))> 0$, for $f$ is dominant.
Let $L$ be another big and nef divisor. There is $c>1$ such that $cL-H$  and $cH-L$ are big. 
Then we have 
$ 
\deg_{j,H}f=(\pi_2^*(H^j)\cdot \pi_1^*(H^{d-j}))\leq c^d(\pi_2^*(L^j)\cdot \pi_1^*(L^{d-j}))=c^d\deg_{j,L}f.
$  
Symetrically, we get $\deg_{j,L}f\leq (c')^d\deg_{j,H}f$ for some $c'>1$. Thus, two big and nef divisors
give rise to comparable degree functions: 
\begin{equation}
C^{-1} \deg_{j,H}(f)\leq \deg_{j,L} (f)\leq C \deg_{j,H} (f) \quad \quad (\forall 0\leq j \leq d)
\end{equation}
for all rational dominant maps $f\colon X\dasharrow X$, and some $C>1$.

\begin{lem}\label{lemprogenfi}
Let $Y$ be a projective and normal variety over $\bfk$ of pure dimension~$d$. 
Let $\pi: Y\dashrightarrow X$ be a dominant and generically finite rational map.
Let $H$ and $L$ be   big and nef divisors, on $X$ and $Y$ respectively.
Then there is a constant $C>1$ such that 
for every $j=0,\dots, d$, and every pair of dominant rational self-maps  $f: X\dashrightarrow X$ and $g: Y\dashrightarrow Y$ satisfying $f\circ \pi=\pi\circ g$, we have 
$$C^{-1}\deg_{j,L}(g)\leq \deg_{j, H}(f)\leq C\deg_{j, L}(g).$$
\end{lem}
\proof 
Denote by $x_1,\dots, x_s$ the generic points of $X$ and $y_1,\dots, y_r$ the generic points of $Y$. Since $\pi$ is 
dominant and generically finite, there is a surjective map $\sigma:\{1,\dots,r\}\to\{1,\dots, s\}$
such that $\pi(y_i)=x_{\sigma(i)}, i=1,\dots,r$. For every $i=1,\dots, r$, set $t_i=\deg[\bfk(y_i): \pi^*\bfk(x_{\sigma(i)})]$
and then 
\begin{equation}
m=\min_{i=1\dots, s}(\sum_{l\in \sigma^{-1}(i)}t_l), \quad m'=\max_{i=1\dots, s}(\sum_{l\in \sigma^{-1}(i)}t_l)
\end{equation}
Take a resolution of $\pi$, defined by a projective and normal variety $Z$,
a birational morphism $\pi_1:Z\to Y$ and a morphism $\pi_2: Z\to X$  satisfying $\pi=\pi_2\circ\pi_1^{-1}.$
Set $h:=\pi_1^{-1}\circ g \circ\pi_1: Z\dashrightarrow Z$. For each index $0\leq j \leq d$, the projection formula (see \cite{Dang-degrees-2020}, Theorem 2.3.2(iv) and the references therein, notably
 \cite{Fulton1998}, Proposition 1.7)  gives
\begin{equation}\label{equationghbir}
\deg_{j,L}g=\deg_{j,\pi_1^*L}h  
\end{equation}
\begin{equation}\label{equationfhdon}
m \deg_{j, H}f \leq \deg_{j,\pi_2^*H}h \leq m' \deg_{j, H}f.
\end{equation}
Since $\pi_1^*L$ and $\pi_2^*H$ are big and nef on $Z$, there is a constant $C_1>1$  that depends only on $\pi_1^*L$ and $\pi_2^*H$ such that 
\begin{equation}\label{equationhhl}C_1^{-1}\deg_{j,\pi_2^*H}h\leq \deg_{j,\pi_1^*L}h\leq C_1\deg_{j,\pi_2^*H}h.
\end{equation}
We conclude the proof by combining the last three equations.
\endproof

\subsection{Equivalent functions}\label{par:equivalent}  
Let $S$ be a set. We shall say that two functions $F,G: S\to \R_{\geq 0}$, are {\bf{equivalent}} if there is 
a constant $C>1$ such that 
\begin{equation}
C^{-1}\max\{G,1\}\leq \max\{F,1\}\leq C\max\{G,1\},
\end{equation}
where $\max\{G,1\}$ denotes the maximum between $G$ and $1$.
We denote by $[F]$ the equivalence class of $F$; the equivalence class $[1]$ coincides with the set of bounded functions $S\to \R_{\geq 0}$.

\subsection{Degree functions on varieties} Now, let $X$ be a variety of pure dimension $d$ over $\bfk$. 
Let $\pi\colon Z\dasharrow X$ be a  
birational map
such that $Z$ is projective and normal, and let $H$ be a big and nef divisor on $Z$. Then, define
the degrees $\deg_{j,H}f$ of any rational dominant map $f\colon X\dasharrow X$ by $\deg_{j,H}f=\deg_{j,H}\pi^{-1}\circ f\circ \pi$. The previous paragraph shows that if we change the model $(Z,\pi)$ or the divisor $H$ (to $H'$), 
then we get two notions of 
degrees $\deg_{j,H}$ and $\deg_{j,H'}$ which are equivalent functions, in the sense of \S~\ref{par:equivalent}, on the set
of rational dominant self-maps of $X$. This justifies the following definition.

Let $S$ be a family of dominant rational maps $f_s\colon X\dasharrow X$, $s\in S$. 
A {\bf{notion of degree}} on $S$ in codimension $j$ is a function $\deg_j\colon S\to \R_{\geq 0}$ 
in the equivalence class $[\deg_{j,H}]$ for some normal projective model $Z\to X$  
and some big and nef divisor $H$ on $Z$. The equivalence class $[\deg_j]$ is unique.

\begin{rem}
Assume further that $X$ is affine.
In Section \ref{par:degrees}, we defined a notion of degree  $f\mapsto \deg f$  (in codimension $1$) on the set of automorphisms of $X$; this notion depends on an embeding $X\hookrightarrow \bbA^N_{\bfk}, N\geq 0.$
However, its equivalence class on $\Aut(X)$ does not depend on the choice of such an embedding and is equal to 
the class $[\deg_1]$ defined in this section. \end{rem}

From Lemma~\ref{lemprogenfi} 
and the definitions, we obtain:

\begin{pro}\label{progenfinidegrefun}
Let $\pi\colon Y\dasharrow X$ be a dominant and generically finite rational map between two varieties $X$ and $Y$ over $\bfk$,
each of pure dimension $d$. Let $S$ be a family of dominant rational maps  $g_s\colon Y\dasharrow Y$ such that 
for every $s$ in $S$ there is a rational map $f_s\colon X\dasharrow X$ that satisfies $\pi\circ g_s = f_s\circ \pi$. 
Then, for each $j=0, \ldots, d$, the equivalence classes of the degree functions  $s\in S \mapsto \deg_j(g_s)$ and $s\in S\mapsto \deg_j(f_s)$ are equal.
\end{pro}

\bibliographystyle{plain}	
\bibliography{dd}
	
\end{document}